\documentclass[a4paper,10pt]{article}

\usepackage[margin=3cm,footskip=1cm]{geometry}

\usepackage{amsmath,amssymb,amsthm,bm,mathtools,xspace,booktabs,url}
\usepackage{graphicx,etoolbox}
\usepackage{ dsfont }
\usepackage{hyperref}
\usepackage{bbold}
\usepackage{tikz}
\usepackage[shortlabels]{enumitem}
\usepackage{xcolor} 
\usepackage{mathrsfs} 
\usepackage{color}

\newcommand{\dd}{\mathrm{d}}

\newcommand{\ME}{\mathcal{E}}
\newcommand{\MS}{\mathcal{S}}

\newcommand{\thz}{\theta}
\newcommand{\thh}{\widehat{\theta}}
\newcommand{\thg}{\widetilde{\theta}}
\newcommand{\Thz}{\Theta}
\newcommand{\Thh}{\widehat{\Theta}}
\newcommand{\Thg}{\widetilde{\Theta}}

\makeatletter
\global\let\tikz@ensure@dollar@catcode=\relax
\makeatother
\setlist{
  listparindent=\parindent,
  parsep=0pt,
}
\usepackage{amssymb}

\usepackage{cleveref}

\AtBeginEnvironment{thebibliography}{
  \small
  \setlength\itemsep{0.75em plus 0.5em minus 0.75em}%
}

\usepackage{caption}
\captionsetup{
  tableposition=top,
  font=small,
  labelfont=bf}

\usepackage[raggedright]{titlesec}

\numberwithin{equation}{section}
\usepackage{amsfonts}

\theoremstyle{plain} 
\newtheorem{theorem}{Theorem}[section]

\newtheorem{Corollary}[theorem]{Corollary}

\newtheorem{definition}[theorem]{Definition}

\theoremstyle{definition} 

\newtheorem{Remark}[theorem]{Remark}
 {
      \theoremstyle{plain}
      
  }

\usepackage{authblk}

\usepackage{tasks}

\date{}

\makeatletter
\newcommand\CorrespondingAuthor[1]{
  \begingroup
  \def\@makefnmark{}
  \footnotetext{Corresponding author: #1}
  \endgroup
}
\makeatother

\makeatletter
\renewenvironment{abstract}{%
  \small%
  \begin{center}%
    \bfseries \abstractname\vspace{-.5em}\vspace{\z@}%
  \end{center}%
  \quote%
}
{
\endquote}
\makeatother

%
%
\usepackage[above,below,section]{placeins}

\frenchspacing


\usepackage[all]{onlyamsmath}

\usepackage{multirow}

\pagestyle{plain} 


%
%





\usepackage{xcolor}
\usepackage{color}

\usepackage{mathrsfs}
\usepackage{pdfpages}

\definecolor{darkmagenta}{rgb}{0.5,0,0.5}
\definecolor{darkgreen}{rgb}{0,0.6,0}
\definecolor{darkblue}{rgb}{0,0,0.6}
\definecolor{darkred}{rgb}{0.8,0,0}
\definecolor{mellow}{rgb}{.847, 0.72, 0.525}

      %

\begin{document}

\title{Ruin-dependent bivariate stochastic fluid processes}

\author{
Hamed Amini \thanks{Department of Industrial and Systems Engineering, University of Florida, Gainesville, FL, USA, email:  aminil@ufl.edu} \ \ \ 
Andreea Minca\thanks{Cornell University, School of Operations Research and Information Engineering, Ithaca, NY, 14850, USA, email: {\tt acm299@cornell.edu}}\ \ \ 
Oscar Peralta\thanks{Cornell University, School of Operations Research and Information Engineering, Ithaca, NY, 14850, USA, email: {\tt op65@cornell.edu}}
}

\maketitle

\begin{abstract}
This paper presents a novel model for bivariate stochastic fluid processes that incorporate a ruin-dependent behavioral switch. Unlike typical models that assume a shared underlying process, our model allows each process to operate independently until a ruin event in one triggers a change in the other. We develop a mathematical framework for our model, exploring its properties and providing closed-form expressions for approximations of key performance metrics, particularly the joint law of the ruin times. Our approach introduces a class of compatible pathwise approximations to analyze ruin probabilities, which we subsequently study through a matrix-analytic framework.
\bigskip

\noindent {\bf Keywords:} Bivariate stochastic fluid processes; first return probability matrix;  ruin time; pathwise approximation.

\end{abstract}

\section{Introduction}

Stochastic fluid processes have been widely used to model various real-world systems, such as dam theory \cite{loynes1962continuous}, telecommunications \cite{anick1982stochastic}, queuing systems \cite{rogers1994fluid,asmussen1995stationary,karandikar1995second}, and insurance surplus processes \cite{badescu2005risk,bladt2019parisian}. These models are often employed to study the dynamic behavior of systems in which resources are continuously depleted and replenished over time. This behavior is dictated by an underlying time-homogeneous Markov jump process. Although one-dimensional processes are well understood (see e.g., \cite{latouche2018analysis} and references therein), the development and analysis of their multivariate counterparts present a significant challenge due to the need to account for complex interdependencies  between the various processes. Such dependence plays a crucial role in these systems as it greatly impacts their behavior and, consequently, the outcomes of any related decision-making processes

A bivariate stochastic fluid process comprises of two continuous-time processes. It is assumed that the fluid levels of each process fluctuate due to random events. The existing literature on bivariate stochastic fluid processes (see e.g., \cite{rabehasaina2006moments,bean2013stochastic,o2017stationary}) has primarily focused on the development of models that share a common underlying process. However, in many real-world applications, this assumption does not apply. Instead, each process evolves independently until a specific event related to one coordinate triggers a shift in behavior for the other. In this paper, we introduce a novel bivariate stochastic fluid process model that incorporates a ruin-dependent behavioral switch, which is triggered upon the ruin of one of the processes. In this model, the bivariate process becomes  Markov jump process with randomized intensities.

In the context of a stochastic fluid process, `ruin' refers to a situation where the fluid level of a process drops below level $0$, thereby rendering the process unsustainable. In the model proposed here, the interdependence between the two processes alters when one process experiences ruin, signifying a behavioral switch. This ruin-dependent behavioral switch can be interpreted as a shift in operating conditions or decision-making rules within a real-world system. For example, within the realm of insurance risk management, the ruin of one portfolio might prompt a shift towards a more conservative investment strategy for the remaining portfolio.

In \cite{aalto2000tandem,kroese2001joint}, the authors investigate the joint stationary distribution of on-off tandem fluid queues, i.e., stochastic fluid processes with two underlying states, where one of the components changes behavior whenever the other queue is idle (i.e., at level $0$). Through operator theoretic considerations, this work was later extended in \cite{o2017stationary} to include the case with finite underlying states. In \cite{rabehasaina2006moments}, the author computes the joint moments for the stationary distribution of a network of stochastic fluid processes whose inflow/outflow fluid dynamics between neighboring states is proportional to their queue size. In \cite{bean2013stochastic}, the authors consider the first return probability for the first coordinate of a bivariate stochastic fluid process on the event that the second coordinate's level is below a certain threshold. We note that none of these works consider the problem of computing the joint probability of ruin. Some related bivariate problems, in the context of ruin processes with jumps, have been studied in \cite{avram2008exit,avram2008two,badescu2011two,badila2014queues} for the case of proportional reinsurance with common shocks. While the authors are able to compute the probability of ultimate joint ruin for a Cram\'er-Lundberg-type process, they do not analyze the law of their respective ruin times, a more challenging descriptor to analyze.

The primary objective of this research is to explore the impact of the ruin-dependent behavioral switch on the joint behavior of two stochastic fluid processes. We will develop the mathematical framework for the proposed model, analyze its properties, and derive closed-form expressions for key performance measures, specifically, the \emph{joint law} of the ruin times of the coordinates. This descriptor is challenging to compute in a univariate setting (see eg. \cite{asmussen1984approximations,asmussen2002erlangian,stanford2005phase}), and has been recently revisited in a bivariate Cram\'er-Lundberg risk setting through the use of Laguerre series \cite{cheung2023finite}. 

In contrast to previous literature, the dependency is triggered in our case  by ruin rather than being associated with the common shocks or the common underlying processes.
More importantly, our approach to computing descriptors of the bivariate process differs from that in the existing literature in that we consider pathwise approximations and their first return probabilities. We construct a class of approximations that are deemed \emph{compatible}, which, heuristically speaking, maintain the structure of the original path while allowing for a noisy behavioral switching time.

The approximation within the class of compatible pastings has the advantage that it can be paired with a Poissonian observation scheme.  This allows us to  study its associated ruin probabilities via novel matrix-analytic and algorithmic considerations.
In essence, our approach amounts to creating a  scheme for  randomly observing the step  at which the behavioral switching occurs. 
Note that state of the art in uniformization techniques for either time-homogeneous or time-inhomogeneous Markov jump processes does not apply to our bivariate process.  Our  uniformization methodology is thus a key ingredient, which could potentially be extended in higher dimensions.

The organization of this paper is as follows: Section~\ref{sec:model} defines the bivariate ruin-dependent stochastic fluid processes model. In Section~\ref{Sec:pasting}, we devise a family of pathwise approximations for these fluid processes, which we refer to as bivariate compatible pastings. Section~\ref{sec:converge} presents our results regarding the convergence of bivariate compatible pastings, while Section~\ref{sec:main} is devoted to calculating the first return probabilities. We provide the proofs of our main theorems in Section~\ref{sec:proofs}. Finally, Section~\ref{sec:conclusion} offers concluding remarks.
\section{Bivariate model definition}\label{sec:model}

Consider a discrete and finite jump-space $\mathcal{E}$, and a reward function $r:\mathcal{E}\mapsto\mathds{R}\setminus\{0\}$. A univariate \emph{stochastic fluid process} $F=\{F(t)\}_{t\ge 0}$ is  defined as
\begin{equation}\label{eq:defunivF}F(t)=\int_0^t r(J(t))\dd s,\end{equation}
where $J$ is a c\`adl\`ag (right continuous with left limits) jump process on $\mathcal{E}$. For the sake of clarity, here we refer to $F$ as the \emph{level} process and $J$ as the \emph{environmental} process (associated to $F$). If the process $J$ is assumed to be Markovian, say driven by an intensity matrix $\bm{A}$, then the distribution of the  univariate stochastic fluid process is  completely characterized by the triplet $(\mathcal{E},\bm{A}, r)$. Alternatively, we can describe the evolution of the $2$-dimensional process $(F, J)$ by means of a piecewise-deterministic Markov process (PDMP), introduced in \cite{davis1984piecewise}, as follows. In this framework, we consider a state-space $\mathds{R}\times\mathcal{E}$ and let  $(F, J)$ be specified by the local characteristics:

\begin{itemize}
    \item A vector field $\mathcal{X}$ characterizing the flow of $(F,J)$ between jumps that takes the form
    \begin{align*}
    \mathcal{X}f(y,i) & = r(i)\frac{\partial f}{\partial y}(y,i),\quad y\in\mathds{R}, i\in \mathcal{E};
    \end{align*}
    this implies that on a holding time where $J$ equals $i$, the process $F$ moves uniformly in $\mathds{R}$ at a rate $r(i)$. This ensures that $F$ is a continuous process that evolves according to \eqref{eq:defunivF}.
    \item Jump intensity $\lambda(y,i)=c_0$ for all $(y,i)\in \mathds{R}\times\mathcal{E}$, where $c_0$ is some fixed constant greater than or equal to $\sup_{i\in\mathcal{E}}\left| A_{ii} \right|$.
    \item Jump kernels of the form
    \begin{align*}
     &Q((y,i);(\dd z, \dd j))=\left\{ \begin{array}{ccc} \tfrac{1}{c_0}A_{ij} &\mbox{if}& y=z, i\neq j,\\
     1+\tfrac{1}{c_0}A_{ij} &\mbox{if}& y=z, i=j,\\
     0&\mbox{if}&y\neq z.
    \end{array}\right.
    \end{align*} 
    The jump intensity and kernels jointly imply that for $J$, a transition from $i$ to $j$ ($i\neq j$) within the time interval $[t,t+\dd t)$ occurs with probability $(c_0\,\dd t)(\tfrac{1}{c_0}A_{ij})=A_{ij}\,\dd t$. Similarly, in the same interval, the probability of no transition in the environmental process is $(1-c_0\,\dd t) + (c_0\,\dd t)(1+ \tfrac{1}{c_0}A_{ii}) = 1 + A_{ii}\,\dd t$, confirming that $J$ is driven by the intensity matrix $\bm{A}$.
\end{itemize}
We emphasize that this specific construction of $J$ is rooted in the concept of uniformization (see for instance, \cite{van2018uniformization}), which fundamentally involves defining a jump process with switch times that occur on a Poissonian grid.

In the bivariate model of interest,  we consider two univariate stochastic fluid process, $F^{(k)}$ for $k\in\{1,2\}$, each one driven by an environmental process $J^{(k)}$. Here, the processes $J^{(1)}$ and $J^{(2)}$ are not Markovian, but rather dependent on certain hitting times of the level processes $F^{(1)}$ and $F^{(2)}$. To be precise, the level processes and their associated environmental processes are independent up to the first time that one of the fluid processes reaches zero. That is, for $k\in\{1,2\}$, we consider the jump-space $\mathcal{E}^{(k)}$, intensity matrix $\bm{A}^{(k)}_{\mathcal{E}^{(k)}\mathcal{E}^{(k)}}$, and reward function $r^{(k)}:\mathcal{E}^{(k)}\mapsto\mathds{R}\setminus \{0\}$. The processes $F^{(1)}$ and $F^{(2)}$ are driven by the triplets $(\mathcal{E}^{(1)}, \bm{A}^{(1)}_{\mathcal{E}^{(1)}\mathcal{E}^{(1)}}, r^{(1)})$ and $(\mathcal{E}^{(2)}, \bm{A}^{(2)}_{\mathcal{E}^{(2)}\mathcal{E}^{(2)}}, r^{(2)})$, respectively, independently of each other up to the \emph{first ruin time} $\tau^{[1]}$, where
\[\tau^{[1]}:=\inf\{s> 0: F^{(1)}(s) = 0\mbox{ or } F^{(2)}(s)=0\}.
\] 
After this point in time, a \emph{behavioral switch} occurs and the environmental process $J^{(k)}$, $k\in\{1,2\}$, will evolve in a new jump-space $\mathcal{S}^{(k)}$ ($\mathcal{E}^{(k)}\cap\mathcal{S}^{(k)}=\emptyset$), with the triplet associated to $F^{(k)}$ now given by $(\mathcal{S}^{(k)}, \bm{A}^{(k)}_{\mathcal{S}^{(k)}\mathcal{S}^{(k)}}, \rho^{(k)})$. Notably, after $\tau^{[1]}$, the driving intensity matrix becomes $\bm{A}^{(k)}_{\mathcal{S}^{(k)}\mathcal{S}^{(k)}}$ and the reward function changes to $\rho^{(k)}:\mathcal{S}^{(k)}\mapsto\mathds{R}\setminus\{0\}$, with the processes $F^{(1)}$ and $F^{(2)}$ evolving independently. We note that $F^{(k)}$ can be expressed as
\begin{equation}\label{eq:integralbivariate1}F^{(k)}(t)=  \int_0^{\tau^{[1]}\wedge t} r^{(k)}(J^{(k)}(s))\dd s + \int_{\tau^{[1]}\wedge t}^{t} \rho^{(k)}(J^{(k)}(s))\dd s,\quad k\in\{1,2\},\, t\ge 0.\end{equation}
For $a,b \in \mathds{R}$, the notation $a\wedge b:=\max\{a,b\}$ represents the maximum of $a$ and $b$, while $a\vee b:=\min\{a,b\}$ denotes the minimum of $a$ and $b$.
\begin{figure}[h!]

  \centering
  \includegraphics[scale=1.7]{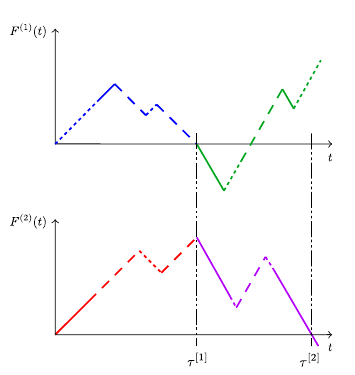}
\caption{Path of the bivariate process $(F^{(1)},F^{(2)})$. The states visited for the environmental process $J^{(k)}$, $k\in\{1,2\}$, are shown via changing line-style (solid, dotted or dashed). The switch from $\mathcal{E}^{(k)}$ to $\mathcal{S}^{(k)}$ at time $\tau^{[1]}$ is visually represented by a change in color: the path for $k=1$ changes from blue to green, while the path for $k=2$ transitions from red to magenta.}\label{fig:bivariate1}
\end{figure}

Stemming from the notion of ruin in risk theory, where a process is deemed ruined if it ever hits (or crosses) level $0$, we term the process $(F^{(1)}, F^{(2)})$ a \emph{ruin-dependent bivariate stochastic fluid process}. Note that the only dependence between the first and second component is triggered by the first passage of either of their levels to  $0$, but besides this, both components essentially evolve independently of each other. Our objective is to study the bivariate law of $(\tau^{[1]},\tau^{[2]})$, where $\tau^{[2]}$ is the time when both stochastic fluid processes have reached zero
\[\tau^{[2]}:=\inf\{s> 0: F^{(1)}(s_1) = 0\mbox{ and } F^{(2)}(s_2)=0\mbox{ for some } s_1,s_2\in[0,s]\}.\]
Figure \ref{fig:bivariate1} provides a visual representation of the aforementioned components.

Without loss of generality, from here on we focus on the case where $J^{(1)}(0)=i^{(1)}$ and $J^{(2)}(0)=i^{(2)}$ with $r^{(1)}(i^{(1)})>0$ and $r^{(2)}(i^{(2)})>0$; other cases can be handled similarly by mirroring the reward rates. In this context, the random times $\tau^{[1]}$ and $\tau^{[2]}$ can be rewritten as exit times:
\begin{align*}
\tau^{[1]}&=\inf\{s> 0: F^{(1)}(s)\notin \mathds{R}_+\mbox{ or } F^{(2)}(s)\notin \mathds{R}_+\}\\
\tau^{[2]}&=\inf\{s> 0: F^{(1)}(s_1) \notin \mathds{R}_+\mbox{ and } F^{(2)}(s_2)\notin \mathds{R}_+\mbox{ for some } s_1,s_2\in[0,s]\},
\end{align*}
where $\mathds{R}_+:=(0,\infty)$.
Similarly to the univariate case, our ruin-dependent bivariate stochastic fluid process can  be represented as a PDMP $X=((F^{(1)}, J^{(1)}),(F^{(2)}, J^{(2)}))$ that is linked to the uniformization method.
The process $X$ now evolves in the state-space 
\[((\mathds{R}_+\times\mathcal{E}^{(1)})\cup (\mathds{R}\times\mathcal{S}^{(1)}))\times ((\mathds{R}_+\times\mathcal{E}^{(2)})\cup (\mathds{R}\times\mathcal{S}^{(2)}));\] each component $(F^{(k)}, J^{(k)})$ evolves in $(\mathds{R}_+\times\mathcal{E}^{(k)}))$ up to $\tau^{[1]}$, and in $(\mathds{R}\times\mathcal{S}^{(k)}))$  afterwards. Unlike the PDMP considered for the univariate case, the state space for $(F^{(k)}, J^{(k)})$ here has a boundary set, $\{0\}\times\mathcal{E}^{(k)}$, that is to be used to trigger the behavioral switch intended for the model. The local characteristics associated with the PDMP $X$ are:
\begin{itemize}
    \item A vector field $\mathcal{X}$ that takes the form
    \begin{align*}
    \mathcal{X}f(y_1,i_1,y_2,i_2) & = (r^{(1)}(i_1)\mathds{1}_{i_1\in\mathcal{E}^{(1)}}+ \rho^{(1)}(i_1)\mathds{1}_{i_1\in\mathcal{S}^{(1)}})\frac{\partial f}{\partial y_1}(y_1,i_1,y_2,i_2)\\
    &\quad + (r^{(2)}(i_2)\mathds{1}_{i_2\in\mathcal{E}^{(2)}}+ \rho^{(2)}(i_2)\mathds{1}_{i_2\in\mathcal{S}^{(2)}})\frac{\partial f}{\partial y_2}(y_1,i_1,y_2,i_2).
    \end{align*}
    Essentially, between jumps and when $(F^{(k)}, J^{(k)})$ is in $\mathds{R}_+\times\{i\}$ for some $i\in\mathcal{E}^{(k)}$, then $F^{(k)}$ uniformly moves within $\mathds{R}_+$ at a rate $r^{(k)}(i)$. Similarly, between jumps and when $(F^{(k)}, J^{(k)})$ is in $\mathds{R}\times\{i\}$ for some $i\in\mathcal{S}^{(k)}$, then $F^{(k)}$ uniformly moves within $\mathds{R}$ at a rate $\rho^{(k)}(i)$. This is consistent with \eqref{eq:integralbivariate1}.
    \item Jump intensity $\lambda:=2\gamma_0$  for some 
    \begin{equation}\label{eq:gamma0def}\gamma_0\ge \sup_{i_1\in\mathcal{E}^{(1)}\cup\mathcal{S}^{(1)}}\left| A^{(1)}_{i_1i_1} \right|\vee \sup_{i_2\in\mathcal{E}^{(2)}\cup\mathcal{S}^{(2)}}\left| A^{(2)}_{i_2i_2} \right|.\end{equation}
    The constant intensity $2\gamma_0$ represents the superposition of two independent Poisson processes of intensity $\gamma_0$, over which the jumps associated to $J^{(1)}$ and $J^{(2)}$ (other than the one triggered by hitting the boundary set) can occur by means of uniformization.
    \item For the interior points $(y_1,i_1)\notin \{0\}\times\mathcal{E}^{(1)}$ and $(y_2,i_2)\notin \{0\}\times\mathcal{E}^{(2)}$, jump kernels of the form
    \begin{align*}
     &Q((y_1,i_1),(y_2,i_2);(\dd z_1, \dd j_1), (\dd z_2, \dd j_2))\\
     &=\left\{ \begin{array}{ccc} \tfrac{1}{2\gamma_0} {A}^{(1)}_{i_1,j_1} &\mbox{if}& y_1=z_1, y_2 = z_2, i_1\neq j_1, i_2=j_2,\\
    \tfrac{1}{2\gamma_0}{A}^{(2)}_{i_2,j_2} &\mbox{if}& y_1=z_1, y_2 = z_2, i_1=j_1, i_2\neq j_2,\\
 1+\tfrac{1}{2\gamma_0}{A}^{(1)}_{i_1,j_1} + \tfrac{1}{2\gamma_0}{A}^{(2)}_{i_2,j_2} &\mbox{if}& y_1=z_1, y_2 = z_2, i_1=j_1, i_2= j_2,\\
 0&\mbox{if} & y_1\neq z_1\mbox{ or }y_2\neq z_2.
    \end{array}\right.
    \end{align*}
    Jointly with the jump intensity, the aforementioned jump kernels imply that a jump of $J^{(k)}$ from $i_k\in\mathcal{E}^{(k)}$ to $j_k\in\mathcal{E}^{(k)}$   within the time interval $[t, t + \dd t)$ (while the other coordinate does not jump) occurs with probability $(2\gamma_0\, \dd t)(\tfrac{1}{2\gamma_0} {A}^{(1)}_{i_k,j_k})={A}^{(1)}_{i_k,j_k}\,\dd t$. The probability that neither process $J^{(1)}$ and $J^{(2)}$ jumps in the same interval is given by 
    \[(1-2\gamma_0\,\dd t) + (2\gamma_0\,\dd t) (1+\tfrac{1}{2\gamma_0}{A}^{(1)}_{i_1,j_1} + \tfrac{1}{2\gamma_0}{A}^{(2)}_{i_2,j_2})=1 - ({A}^{(1)}_{i_1,j_1} + {A}^{(2)}_{i_2,j_2})\,\dd t.\]
    This confirms that up to $\tau^{[1]}$, the processes $J^{(1)}$ and $J^{(2)}$ evolve in an independent fashion according to the intensity matrices $\bm{A}^{(1)}_{\mathcal{E}^{(1)}\mathcal{E}^{(1)}}$ and $\bm{A}^{(2)}_{\mathcal{E}^{(2)}\mathcal{E}^{(2)}}$, respectively. Similar arguments confirm as well that after $\tau^{[1]}$, $J^{(1)}$ and $J^{(2)}$ evolve independently driven by $\bm{A}^{(1)}_{\mathcal{S}^{(1)}\mathcal{S}^{(1)}}$ and $\bm{A}^{(2)}_{\mathcal{S}^{(2)}\mathcal{S}^{(2)}}$. 
    Finally, for the boundary points $(y_1,i_1)\in \{0\}\times\mathcal{E}^{(1)}$ and $(y_2,i_2)\in \{0\}\times\mathcal{E}^{(2)}$,
    \begin{align*}
     &Q((y_1,i_1),(y_2,i_2);(\dd z_1, \dd j_1), (\dd z_2, \dd j_2))=\left\{ \begin{array}{ccc} P^{(1)}_{i_1,j_1}P^{(2)}_{i_2,j_2} &\mbox{if}&  y_2 = z_2, y_2 = z_2,\\
 0&\mbox{if} & y_1\neq z_1\mbox{ or }y_2\neq z_2,
    \end{array}\right.
    \end{align*}
for a transition matrix $\bm{P}^{(k)}=\{P^{(k)}_{ij}\}_{i\in\mathcal{E}^{(k)}, j\in\mathcal{S}^{(k)}}$, which establishes how the process $J^{(k)}$ switches its state at $\tau^{[1]}$.
\end{itemize}

As mentioned earlier, our main objective is to compute the distribution of $(\tau^{[1]},\tau^{[2]})$, a task that presents several challenges. 
\begin{itemize}
\item To calculate the univariate law of $\tau^{[1]}$, we need to separately define the law of the hitting time of $0$ for $F^{(1)}$ and $F^{(2)}$, considering their independent nature up to $\tau^{[1]}$. Existing literature primarily provides such an analysis via their Laplace transforms, which necessitates inversion, a step that demands substantial computational resources. For a comprehensive analysis of inversion methods applicable to the first return probabilities of stochastic fluid processes, see \cite{telek2022transient}. 

\item Once the univariate distribution of $\tau^{[1]}$ is computed, in principle, we could use the law of total probability and condition on the paths up to $\tau^{[1]}$ in order to compute the joint distribution of $\tau^{[1]}$ and $\tau^{[2]}$. The resulting conditional is equivalent to analyzing the hitting time of $0$ for a univariate stochastic fluid process whose level starts from either $F^{(1)}(\tau^{[1]})$ or $F^{(2)}(\tau^{[1]})$ (whichever is non-zero). Although the existing literature provides ways to compute first passage probabilities with a fixed initial level (see e.g., \cite{asmussen1995stationary}), no clear methodology exists for computing the distribution of this random level.
\end{itemize}
In this study, we follow an alternative approach that involves constructing an approximation scheme for $\tau^{[1]}$ and $\tau^{[2]}$. Specifically, we create a family of pathwise approximations for $F^{(1)}$ and $F^{(2)}$, which we refer to as bivariate compatible pastings. These approximations allow for a behavioral switching time that is distinct, albeit close to the original. Remarkably, allowing for this behavior provides the flexibility needed to study the first return times of the approximated paths through Poissonian and algorithmic considerations. Ultimately, this approach will lead to an approximation of the law of $(\tau^{[1]},\tau^{[2]})$.

\section{Construction of bivariate compatible pasting}\label{Sec:pasting}
Consider a modification of the paths of $F^{(1)}$ and $F^{(2)}$, denoted as  $\widetilde{F}^{(1)}$ and $\widetilde{F}^{(2)}$, that allows for the behavioral switch to occur at epochs in time which are \emph{close to} (but not necessarily exactly at) $\tau^{[1]}$. These processes $\widetilde{F}^{(1)}$ and $\widetilde{F}^{(2)}$ can be thought to have a triggering time which is noisy and  may not occur at the exact moment of hitting time $0$ for either component, but otherwise, they behave similarly to the original processes  $F^{(1)}$ and $F^{(2)}$. We aim to make this idea precise, as well as to measure the uniform distance between the original process ${F}^{(1)}$ and the approximation $\widetilde{F}^{(1)}$.

Our first task is to define a class of pathwise modifications, which we term \emph{compatible pastings}. These are fully characterized by the paths of the PDMP $X=((F^{(1)}, J^{(1)}),(F^{(2)}, J^{(2)}))$ (appended with the Poissonian times at which their uniformization takes place), as well as new random times $\sigma^{(1)}$ and $\sigma^{(2)}$, which are to be used as the behavioral switching times for the first and second fluid coordinates, respectively. Essentially, for a compatible pasting, we stitch certain intervals of the original path onto different time and space points in such a way that the new paths remain consistent and close to the original ones. We will delve into these details next.

Let $\Theta^{(1)}=\{\thz^{(1)}_\ell\}_{\ell\ge 0}$ and $\Theta^{(2)}=\{\thz^{(2)}_\ell\}_{\ell\ge 0}$ be the Poisson times over which the intensity-induced jumps of $J^{(1)}$ and $J^{(2)}$ occur, respectively, in the sense of the PDMP or uniformization construction. Recall that each correspond to a Poisson process of intensity $\gamma_0$ of the form \eqref{eq:gamma0def}, and since they are independent of each other, their superposition yields a Poisson process of intensity $2\gamma_0$. 

From here on, the superscript $[1]$' denotes the index of the first process $F^{(k)}$ that defaults, and $[2]$' denotes the index of the second process $F^{(k)}$ that defaults. In this sense, the superscript $[1]$' is interchangeable with $(k)$' if and only if $F^{(k)}(\tau^{[1]})=0$. We apply this notation to all associated processes and random variables, e.g., $F^{[k]}, J^{[k]}, \widetilde{F}^{[k]}, \widetilde{J}^{[k]}, \Theta^{[k]}$, etc. With this notation in place, studying $\tau^{[k]}$ is the same as analyzing the first return times of $F^{[k]}$.

For $k\in\{1,2\}$, let us  define $\ell^{(k)}$ by
\[\ell^{(k)}=\arg_\ell\left\{\tau^{[1]} \in \big[\thz_{\ell}^{(k)}, \thz_{\ell+1}^{(k)}\big)\right\},\]
with the convention that $\ell^{(k)}=\infty$ if $\tau^{[1]}=\infty$. In other words, the time of the first default happens between the $\ell^{(k)}$-th and $(\ell^{(k)} + 1)$-th arrivals of $\Thz^{(k)}$. We then consider the following (random) arbitrary times  $\sigma^{[1]}$ and $\sigma^{[2]}$, which satisfy 
\begin{equation}\label{eq:compatible}
 \sigma^{[1]}\in \big[\tau^{[1]}, \thz_{\ell^{[1]}+1}^{[1]}\big) \ \text{and} \ \sigma^{[2]}\in \big[\thz_{\ell^{[2]}}^{[2]}, \thz_{\ell^{[2]}+1}^{[2]}\big).
\end{equation}
Respectively, the random times $\sigma^{[1]}$ and $\sigma^{[2]}$ are intended to be used as the new behavioral switching times for the approximating processes $\widetilde{F}^{[1]}$ and $\widetilde{F}^{[2]}$ associated to ${F}^{[1]}$ and ${F}^{[2]}$. Note that under \ref{eq:compatible}, the new behavioral switching time $\sigma^{[1]}$ for $\widetilde{F}^{[1]}$ can only occur after $\tau^{[1]}$, but before the subsequent Poisson epoch in $\Thz^{[1]}$. This ensures that the modified level process $\widetilde{F}^{[1]}$ crosses level $0$, while also stipulating it to be triggered before the next Poissonian time at which a jump could occur. Equation \ref{eq:compatible} also ensures that the new switching time $\sigma^{[2]}$ applicable to $F^{[2]}$ happens within the same Poissonian interval in $\Thz^{[2]}$ in which $\tau^{[1]}$ is located.

\begin{definition}[Bivariate compatible pasting] For random times $\sigma^{[1]}$ and $\sigma^{[2]}$ that satisfy \eqref{eq:compatible},
we define the \emph{bivariate compatible pasting} $(\widetilde{F}^{[1]},\widetilde{F}^{[2]})$ of $({F}^{[1]},{F}^{[2]})$ as the following path transformation:
\begin{itemize}
    \item For all $t\in [0,\tau^{[1]}\wedge \sigma^{[k]})$,  \[\widetilde{J}^{[k]}(t) = J^{[k]}(t) \ \ \text{and} \ \ \widetilde{F}^{[k]}(t) = F^{[k]}(t).\]
    \item For all $t\in [\tau^{[1]} \wedge \sigma^{[k]},\sigma^{[k]})$, 
    \[\widetilde{J}^{[k]}(t) = J^{[k]}(\tau^{[1]}\wedge \sigma^{[k]}-) \ \ \text{and} \ \ \widetilde{F}^{[k]}(t) = \widetilde{F}^{[k]}(\tau^{[1]}\wedge \sigma^{[k]}-) + \int_{\tau^{[1]}\wedge \sigma^{[k]}}^t r^{[k]}(\widetilde{J}^{[k]}(s))\dd s.\] 
    \item For all $t\in [\sigma^{[k]}, \thz_{\ell_{k}+1}^{[k]})$, \[\widetilde{J}^{[k]}(t) = J^{[k]}(\tau^{[1]}) \ \ \text{and} \ \  \widetilde{F}^{[k]}(t) = \widetilde{F}^{[k]}(\sigma^{[k]} -) + \int_{\sigma^{[k]}}^t \rho^{[k]}(\widetilde{J}^{[k]}(s))\dd s.\] 
    \item For all $t\in [\thz_{\ell_{k}+1}^{[k]},\infty)$, \[\widetilde{J}^{[k]}(t) = J^{[k]}(t) \ \ \text{and} \ \  \widetilde{F}^{[k]}(t) = \widetilde{F}^{[k]}(\thz_{\ell_{k}+1}^{[k]}-) + \int_{\thz_{\ell_{k}+1}^{[k]}}^t \rho^{[k]}(\widetilde{J}^{[k]}(s))\dd s.\]
\end{itemize}    
\end{definition}

In essence, $\widetilde{J}^{[k]}$ coincides with $J^{[k]}$ on $[0, \thz^{[k]}_{\ell_k})\cup[\thz^{[k]}_{\ell_k+1},\infty)$, and on $[\thz^{[k]}_{\ell_k}, \thz^{[k]}_{\ell_k+1})$, $J^{[k]}$ and $\widetilde{J}^{[k]}$ visit the same two states, with the switch happening at $\sigma^k$ instead of $\tau^{[1]}$; see Figure \ref{fig:pasting1}. Meanwhile, $\widetilde{F}^{[k]}$ is defined in such a way that
\[\widetilde{F}^{[k]}(t)= \int_0^{\sigma^{[k]} \wedge t} r^{[k]}(\widetilde{J}^{[k]}(s))\dd s + \int_{\sigma^{[k]}\wedge t}^{t} \rho^{[k]}(\widetilde{J}^{[k]}(s))\dd s.\] 
\begin{figure}[h!]
  \centering
  \includegraphics[scale=1.5]{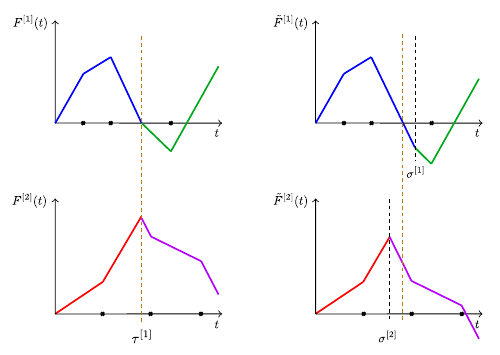}
\caption{Displayed in the left column is a path of a ruin-dependent bivariate stochastic fluid process $(F^{[1]}, F^{[2]})$. In the right column, we present its associated compatible pasting $(\widetilde{F}^{[1]}, \widetilde{F}^{[2]})$, with behavioral switches occurring at times $\sigma^{[1]}$ and $\sigma^{[2]}$ respectively.  Note that besides the differences in time of these behavioral switches, the environmental processes $\widetilde{J}^{[1]}$ and $\widetilde{J}^{[2]}$ jump identically to ${J}^{[1]}$ and ${J}^{[2]}$, respectively.}\label{fig:pasting1}
\end{figure}

Let us stress that, if \ref{eq:compatible} were not to hold, then one cannot consistently build a compatible pasting. For instance, if $\sigma^{[k]} < \thz^{[k]}_{\ell_k}$, then at $\thz^{[k]}_{\ell_k}$ the process $\widetilde{J}^{[k]}$ would need to switch from $\mathcal{E}^{[k]}$ to $\mathcal{S}^{[k]}$. The lack of such a switch for $J^{[k]}$, which at $\thz^{[k]}_{\ell_k}$ remains within  $\mathcal{E}^{[k]}$, makes it impossible to choose a switching for $\widetilde{J}^{[k]}$ that retains the same probabilistic features as $J^{[k]}$. Similar considerations are present when $\sigma^{[k]} > \thz^{[k]}_{\ell_k+1}$; see Figure \ref{fig:counter1} for an explicit example were the compatible pasting fails to hold. 
\begin{figure}[h!]
  \centering
  \includegraphics[scale=2]{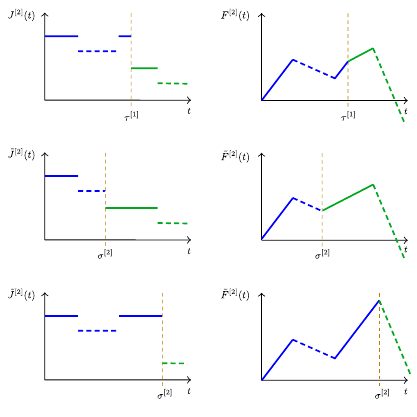}
\caption{On the left is the environmental process $J^{[k]}$ with a behavioral switch at $\tau^{[1]}\in [\thz^{[k]}_{\ell_k}, \thz^{[k]}_{\ell_k+1})$ (upper figures), and ``failed'' new behavioral switches at $\sigma^{(k)'}< \thz^{[k]}_{\ell_k}$ (upper figures) and respectively  $\sigma^{(k)''}> \thz^{[k]}_{\ell_k+1}$ (lower figures). On the right are their corresponding level processes. Note that for both ``failed'' new behavioral switches,  transitions of $J^{[k]}$ within $\mathcal{E}^{[k]}$ were "erased" and the associated processes $\widetilde{F}^{[k]}$ exhibit  peaks that do not resemble the shape of the original process $F^{[k]}$.}\label{fig:counter1}
\end{figure}

The main advantage of compatible pastings is that finding bounds for the (uniform) distance between the original path $F^{[k]}$ and $\widetilde{F}^{[k]}$ is straightforward. Partitioning into cases, 

\begin{align*}
     &|F^{[k]}(t)-\widetilde{F}^{[k]}(t)|\\
     &=\left\{ \begin{array}{ccc} 0 &\mbox{for}& t\in [0,\tau^{[1]}\wedge \sigma^{[k]}),\\
   (t-\tau^{[1]}\wedge \sigma^{[k]})\,\left|r^{[k]}(\widetilde{J}^{[k]}(t)) - \rho^{[k]}(\widetilde{J}^{[k]}(t))\right|  &\mbox{for}& t\in [\tau^{[1]}\wedge \sigma^{[k]},\tau^{[1]}\vee \sigma^{[k]}),\\
\left|F^{[k]}(\tau^{[1]}\vee \sigma^{[k]}-)-\widetilde{F}^{[k]}(\tau^{[1]}\vee \sigma^{[k]}-)\right| &\mbox{for}& t\in [\tau^{[1]}\vee \sigma^{[k]},\infty).
    \end{array}\right.
    \end{align*}

This leads us to the following important result that establishes $\widetilde{F}^{[k]}$ as a uniform approximation of $F^{[k]}$ in the case $\sigma^{[k]}$ is close to $\tau^{[1]}$.
\begin{theorem}\label{th:compatibledistance}
Assume that \ref{eq:compatible} holds. Then we have
\begin{equation}\label{eq:distancecomp1}\sup_{t\ge 0}\left|F^{[k]}(t)-\widetilde{F}^{[k]}(t)\right|\le |\sigma^{[k]}-\tau^{[1]}|\left(\max_{i\in\mathcal{E}^{[k]}}\left\{r^{[k]}(i)\right\} + \max_{i\in\mathcal{S}^{[k]}}\left\{\rho^{[k]}(i)\right\}\right).\end{equation}
\end{theorem}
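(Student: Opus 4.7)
The plan is to exploit the three-case expression for $|F^{[k]}(t)-\widetilde{F}^{[k]}(t)|$ displayed just above the statement, and bound the supremum by treating each of its three regimes separately. Since $\sup_{t\ge 0}|F^{[k]}(t)-\widetilde{F}^{[k]}(t)|$ equals the maximum of the suprema over the three regimes, it is enough to control each one by the right-hand side of \eqref{eq:distancecomp1}.

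First I would observe that on $[0, \tau^{[1]}\wedge \sigma^{[k]})$ the two paths coincide by the very definition of the compatible pasting, so that piece contributes $0$. The central regime $[\tau^{[1]}\wedge \sigma^{[k]}, \tau^{[1]}\vee \sigma^{[k]})$ is where the gap opens up. By the compatibility requirement \eqref{eq:compatible}, this window is contained in the single Poissonian slot $[\thz^{[k]}_{\ell_k}, \thz^{[k]}_{\ell_k+1})$, so no intensity-driven transition of $\widetilde{J}^{[k]}$ occurs and the environmental state remains constant throughout. On this window, one of the two processes still evolves at a rate of the form $r^{[k]}(\cdot)$ while the other has already switched to $\rho^{[k]}(\cdot)$, so the triangle inequality yields an instantaneous rate gap bounded by $\max_{i\in\mathcal{E}^{[k]}} r^{[k]}(i) + \max_{i\in\mathcal{S}^{[k]}} \rho^{[k]}(i)$. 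Multiplying this constant bound by the length of the window, which is exactly $|\sigma^{[k]} - \tau^{[1]}|$, delivers the desired estimate on this piece.

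Finally, on $[\tau^{[1]}\vee \sigma^{[k]}, \infty)$ I would argue that the difference is frozen at the value achieved at $\tau^{[1]}\vee \sigma^{[k]}$. This is the re-synchronization step: for $t$ inside $[\tau^{[1]}\vee \sigma^{[k]}, \thz^{[k]}_{\ell_k+1})$ one verifies directly from the piecewise definition of $\widetilde{J}^{[k]}$ that $\widetilde{J}^{[k]}(t) = J^{[k]}(\tau^{[1]}) = J^{[k]}(t)$, using that $J^{[k]}$ experiences no intensity-driven jump in that Poissonian slot and no further boundary-triggered switch occurs after $\tau^{[1]}$; while for $t\ge \thz^{[k]}_{\ell_k+1}$ the two environmental processes are identical by construction. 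Hence both level processes accumulate area at exactly the same instantaneous rate from $\tau^{[1]}\vee \sigma^{[k]}$ onward, so their difference is constant in $t$ beyond that point and is therefore already bounded by the central-regime estimate.

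I do not anticipate a serious obstacle, since most of the structural work is already encoded in the three-case expression for $|F^{[k]}(t)-\widetilde{F}^{[k]}(t)|$ preceding the theorem. The only delicate point is the re-synchronization claim in the third regime, which rests on the twin facts that after $\tau^{[1]}$ the boundary kernel no longer fires (at least one coordinate is now in $\mathcal{S}^{[\cdot]}$) and that within the Poissonian slot $[\thz^{[k]}_{\ell_k}, \thz^{[k]}_{\ell_k+1})$ the uniformization construction forbids any intensity-driven jump of $J^{[k]}$; together they guarantee that once both the real ruin time and the pasted switching time have elapsed, $F^{[k]}$ and $\widetilde{F}^{[k]}$ share the same driving state.
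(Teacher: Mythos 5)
Your proposal is correct and follows essentially the same route as the paper: the paper simply records the three-regime expression for $|F^{[k]}(t)-\widetilde{F}^{[k]}(t)|$ immediately above the statement and presents the theorem as its direct consequence, and your argument spells out exactly that — the first regime contributes zero, the middle regime is bounded by the rate gap times the window length $|\sigma^{[k]}-\tau^{[1]}|$, and the gap is frozen on the last regime because $\widetilde{J}^{[k]}$ and $J^{[k]}$ coincide once both $\tau^{[1]}$ and $\sigma^{[k]}$ have passed (within the slot because no intensity-driven jump occurs and no further boundary switch fires; beyond $\thz^{[k]}_{\ell_k+1}$ by construction). The only caveat, inherited verbatim from the paper's own statement, is that the rate-gap bound should really read $\max_{i}|r^{[k]}(i)|+\max_{i}|\rho^{[k]}(i)|$ with absolute values, since the rewards take values in $\mathds{R}\setminus\{0\}$ and may be negative.
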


Remarkably, Theorem \ref{th:compatibledistance} implies that the uniform distance between $F^{[k]}$ and $\widetilde{F}^{[k]}$ is proportional to $|\sigma^{[k]}-\tau^{[1]}|$. As a result, the convergence analysis between the \emph{paths} $F^{[k]}$ and $\widetilde{F}^{[k]}$ can be replaced with a study of the convergence of the \emph{random variables} $\sigma^{[k]}$ to $\tau^{[1]}$. In the following section, we quantify this distance when $\sigma^{[1]}$ and $\sigma^{[2]}$ are considered as the arrival times of a high-frequency Poisson process. This particular choice enables us to develop a tractable bivariate model where the first passage probabilities can be explicitly calculated. This will be discussed further in Section \ref{sec:main}.

\section{Convergence of bivariate compatible pastings}\label{sec:converge}

Let us consider two independent Poisson processes $\Thh^{(1)}$ and $\Thh^{(2)}$ on $\mathds{R}_+$, each  assumed to have high intensity $\gamma-\gamma_0\gg 0$. For $k\in\{1,2\}$, we let $\Thg^{(k)}=\{\thg^{(k)}_\ell\}_{\ell\ge 0}$ be the superposition of the Poisson processes $\Thz^{(k)}$ and $\Thh^{(k)}$, resulting in a Poisson process of intensity $\gamma$. Roughly speaking, our first aim is to define the behavioral switching time $\sigma^{[1]}$ of  $\widetilde{F}^{[1]}$ to be the Poissonian arrival in $\widetilde{\Theta}^{[1]}$ that occurs straight after $\tau^{[1]}$, say the $\ell_*$-th. Secondly, we let $\sigma^{[1]}$ be the $\ell_*$-th observation in $\widetilde{\Theta}^{[2]}$; see Figure \ref{fig:CompatiblePoisson}. 
\begin{figure}[h!]
  \centering
  \includegraphics[scale=2]{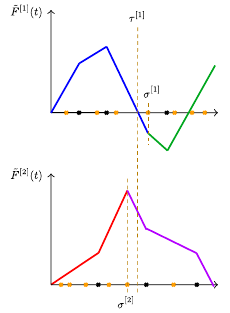}
\caption{Paths of compatible pastings with behavioral switching times occurring a high frequency Poisson grid. For $k\in\{1,2\}$, the Poissonian arrivals of $\Thz^{[k]}$ are shown with black crosses, while those of $\Thh^{[k]}$ are shown in orange; those of $\Thg^{[k]}$ are the superposition of all the crosses. Here $\ell_*=6$, so that $\sigma^{[1]}=\thh^{[1]}_6$ and $\sigma^{[1]}=\thh^{[1]}_6$.}\label{fig:CompatiblePoisson}
\end{figure}
Heuristically, if $\gamma$ is a high-scale parameter leading to a high-frequency Poisson grid $\Thg^{[1]}$, we can anticipate $\sigma^{[1]}$ to occur shortly after $\tau^{[1]}$. Moreover, since the grids  $\Thg^{[1]}$ and $\Thg^{[2]}$ have the exact same distributional properties, we can also expect their $\ell_*$-th arrivals, corresponding to $\sigma^{[1]}$ and $\sigma^{[2]}$, to be closely aligned.

Let us now provide precise definitions in order to rigorously analyze the convergence of the aforementioned high-frequency Poissonian observations scheme for $\sigma^{[1]}$ and $\sigma^{[2]}$, for which we will need some auxiliary random variables first. On $\tau^{[1]}<\infty$, define
\begin{align*}
    \ell_*=\inf\{\ell\ge 1 : F^{[1]}(\thg^{[1]}_\ell)<0\},\qquad\sigma^{[1]}_*=\thg^{[1]}_{\ell_*},\qquad\sigma^{[2]}_*=\thg^{[2]}_{\ell_*};
\end{align*}
note that $\ell_*$ can be alternatively characterized by $\ell_*=\inf\{\ell\ge 1 : \tau^{[1]}<\thg^{[1]}_\ell\}$.
In the following theorem, we provide a strong rate of convergence of the random variables $\sigma^{[1]}_*$ and $\sigma^{[2]}_*$ to $\tau^{[1]}$ as $\gamma\rightarrow\infty$.

\begin{theorem}\label{th:convergence}
Fix some $\epsilon\in (0,1)$ and $q>0$. Then, there exist a function $\delta(\gamma, \epsilon, q)$ that is asymptotically proportional to $(\log \gamma)\gamma^{-1/2+ \epsilon/2}$, and function $K(\gamma, \epsilon, q)$ that is asymptotically proportional to $\gamma^\epsilon$, such that for $k=1,2$,
\begin{equation}\mathds{P}\left(|\tau^{[1]} - \sigma^{[k]}_*|> \delta(\gamma, \epsilon, q), \tau^{[1]}\le K(\gamma, \epsilon, q)\right) = o(\gamma^{-q})\quad\mbox{as}\quad\gamma\rightarrow\infty.\label{eq:Poisson1}\end{equation}
\end{theorem}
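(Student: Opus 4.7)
The plan is to treat the cases $k=1$ and $k=2$ separately. The first is an immediate consequence of the strong Markov property of Poisson processes, whereas the second requires a Bernstein-type concentration combined with a grid discretization to by-pass the correlations that the PDMP imposes on $\tau^{[1]}$ and both observation grids $\Thg^{[1]},\Thg^{[2]}$.

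\textbf{Case $k=1$.} The superposition $\Thg^{[1]}=\Thz^{[1]}\cup\Thh^{[1]}$ is a Poisson process of intensity $\gamma$ that is adapted to the filtration jointly generated by the PDMP $X$ and the independent grid $\Thh^{[1]}$. Since $\tau^{[1]}$ is a stopping time for this filtration, the strong Markov property yields that the waiting time $\sigma^{[1]}_*-\tau^{[1]}$ until the next arrival of $\Thg^{[1]}$ after $\tau^{[1]}$ is exponential with rate $\gamma$. Hence $\mathds{P}(\sigma^{[1]}_*-\tau^{[1]}>\delta)\le e^{-\gamma\delta}$, which is $o(\gamma^{-q})$ as soon as $\gamma\delta$ grows faster than $\log\gamma$, in particular for the claimed rate.

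\textbf{Case $k=2$.} I would decompose
\[ \sigma^{[2]}_*-\tau^{[1]} \;=\; \underbrace{\bigl(\thg^{[2]}_{\ell_*}-\ell_*/\gamma\bigr)}_{A} \;+\; \underbrace{\bigl(\ell_*/\gamma-\tau^{[1]}\bigr)}_{B} \]
and show each term is $O(\sqrt{\log\gamma}\cdot\gamma^{-1/2+\epsilon/2})$ with probability $1-o(\gamma^{-q})$ on the event $\{\tau^{[1]}\le K\}$ with $K=C_K\gamma^\epsilon$. For the term $B$, I would write $\ell_*-1=N^{[1]}(\tau^{[1]})$ for the counting process $N^{[1]}$ of $\Thg^{[1]}$; since $\tau^{[1]}$ and $N^{[1]}$ share randomness through $\Thz^{[1]}$, I would partition $[0,K]$ into $m$ cells of width $h=K/m$ with $m\gg\gamma^{2+2\epsilon+q}$ and work on the intersection of (i) the high-probability event that every cell contains at most one arrival of $\Thg^{[1]}$ (whose failure probability is $\le m(\gamma h)^2=\gamma^2 K^2/m=o(\gamma^{-q})$) with (ii) the Bernstein event $\max_{j\le m}|N^{[1]}(t_j)-\gamma t_j|\le C\sqrt{\gamma K\log\gamma}$, which has probability $1-o(\gamma^{-q})$ for $C$ large by union-bounding $\mathds{P}(|N^{[1]}(t_j)-\gamma t_j|>x)\le 2e^{-x^2/(2\gamma K+2x/3)}$ at the $m$ grid points. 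On this intersection one gets $|N^{[1]}(\tau^{[1]})-\gamma\tau^{[1]}|=O(\sqrt{\gamma K\log\gamma})$ and hence $|B|=O(\sqrt{\log\gamma}\cdot\gamma^{-1/2+\epsilon/2})$. For the term $A$, a Chernoff bound on $N^{[1]}(K)$ gives $\ell_*\le L:=2\gamma K$ with probability $1-o(\gamma^{-q})$ on $\{\tau^{[1]}\le K\}$, and then the elementary inclusion
\[ \{|A|>x,\,\ell_*\le L\} \;\subseteq\; \bigcup_{n=1}^{L}\{|\thg^{[2]}_n-n/\gamma|>x\} \]
combined with the marginal Gamma tail $\mathds{P}(|\thg^{[2]}_n-n/\gamma|>x)\le 2e^{-\gamma^2 x^2/(4n)}$ and $x=C\sqrt{L\log\gamma}/\gamma$ delivers $|A|=O(\sqrt{\log\gamma}\cdot\gamma^{-1/2+\epsilon/2})$ with probability $1-o(\gamma^{-q})$; crucially, the unconditional union bound sidesteps any need for independence between $\ell_*$ and $\thg^{[2]}_{\ell_*}$.

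\textbf{Main obstacle.} The principal difficulty is that $\tau^{[1]}$ is correlated with both grids $\Thg^{[1]},\Thg^{[2]}$ via the underlying Poisson processes $\Thz^{[1]},\Thz^{[2]}$ driving the PDMP, which precludes transplanting the clean strong-Markov argument of the $k=1$ case to $k=2$. The grid-discretization trick converts the problem into a supremum over deterministic times where standard Bernstein inequalities apply, and the delicate part is calibrating the grid density (polynomially fine in $\gamma$) so that the discretization error, the one-arrival-per-cell failure, and the Bernstein tail union are \emph{simultaneously} of order $o(\gamma^{-q})$, while still producing the target rate $\sqrt{\log\gamma}\cdot\gamma^{-1/2+\epsilon/2}$ which is absorbed in the claimed $(\log\gamma)\gamma^{-1/2+\epsilon/2}$.
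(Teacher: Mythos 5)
Your proposal is correct, and it takes a genuinely different route from the paper's. The paper imports a uniform moderate-deviation estimate from \cite{bladt2022strongly} (Lemma 1 there): it introduces good events $G_{(1,\lambda)},G_{(2,\lambda)},G_{(*,\lambda)}$ controlling $\max_{\ell\le\lfloor\gamma^{1+\epsilon}\rfloor}|\thg^{[k]}_\ell-\ell/\gamma|$ and $\max_{\ell\le\lfloor\gamma^{1+\epsilon}\rfloor}|\thg^{[1]}_\ell-\thg^{[2]}_\ell|$, each of probability $1-o(\gamma^{-q})$, and then completes the argument \emph{deterministically} on the intersection via triangle inequalities: $|\tau^{[1]}-\sigma^{[1]}_*|\le|\thg^{[1]}_{\ell_*}-\thg^{[1]}_{\ell_*-1}|$ is pushed through the deterministic grid, and $|\tau^{[1]}-\sigma^{[2]}_*|\le|\tau^{[1]}-\sigma^{[1]}_*|+|\thg^{[1]}_{\ell_*}-\thg^{[2]}_{\ell_*}|$. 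Your decomposition $\sigma^{[2]}_*-\tau^{[1]}=(\thg^{[2]}_{\ell_*}-\ell_*/\gamma)+(\ell_*/\gamma-\tau^{[1]})$ is morally the same triangle inequality through the lattice $\ell/\gamma$, but you construct the $o(\gamma^{-q})$ tails from scratch via Bernstein for the Poisson counting process and Gamma tails for the arrival times, plus a cell-discretization of $[0,K]$ to decouple $\tau^{[1]}$ from the grid. This buys you a self-contained proof; it also shows, as the paper itself notes in the remark following its proof, that the $k=1$ case admits a much sharper rate $\delta\sim\log\gamma/\gamma$ via the $\mbox{Exp}(\gamma)$ residual. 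The paper's approach buys modularity (the black-box lemma does the uniform estimation, and the same argument structure handles $k=1,2$ symmetrically) and avoids the bookkeeping of calibrating the cell width $h$ against the Bernstein and one-arrival-per-cell failures. Two small cautions on your version: the stated Gamma tail $\mathds{P}(|\thg^{[2]}_n-n/\gamma|>x)\le 2e^{-\gamma^2x^2/(4n)}$ only holds in the subgaussian regime $\gamma x\le n$; for $n\lesssim\gamma x$ the Bernstein bound degrades to $\exp(-c\gamma x)$, which is still amply $o(\gamma^{-q})$ for your choice of $x$, so the union bound over $n\le L$ survives but needs the two-regime form of the inequality. Likewise, for the $k=1$ case you should note that $\tau^{[1]}$ is almost surely not itself a grid point, so the residual really is $\mbox{Exp}(\gamma)$.
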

\begin{proof}
See Section~\ref{proof:th:convergence}.
\end{proof}

As a corollary of Theorem~\ref{th:convergence}, we have the following convergence result for the first ruin time. 

\begin{Corollary}\label{cor:strongtau1}
On $\tau^{[1]}<\infty$,
\begin{equation}\label{eq:strongtau1}
 \sigma^{[k]}_* \rightarrow \tau^{[1]}\quad\mbox{as}\quad \gamma\rightarrow\infty\quad\mbox{for}\quad k\in\{1,2\},
 \end{equation}
  with the convergence holding in an almost sure sense.
\end{Corollary}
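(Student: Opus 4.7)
The plan is to upgrade the polynomial tail bound in Theorem \ref{th:convergence} to almost sure convergence via a Borel--Cantelli argument along an integer sequence of the intensity parameter.

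First I would fix $\epsilon \in (0,1)$ and choose $q > 1$. For each positive integer $n$, applying Theorem \ref{th:convergence} with $\gamma = n$ to the event
\begin{equation*}
B_n^{[k]} := \bigl\{|\tau^{[1]} - \sigma^{[k]}_*|> \delta(n, \epsilon, q)\bigr\} \cap \bigl\{\tau^{[1]}\le K(n, \epsilon, q)\bigr\}
\end{equation*}
yields $\mathds{P}(B_n^{[k]}) = o(n^{-q})$. Since $q > 1$, the probabilities $\mathds{P}(B_n^{[k]})$ are summable in $n$, and the first Borel--Cantelli lemma gives $\mathds{P}(B_n^{[k]} \text{ i.o.}) = 0$ for $k \in \{1,2\}$.

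Because $K(n, \epsilon, q)$ grows asymptotically like $n^\epsilon$ and in particular diverges, every sample path $\omega \in \{\tau^{[1]} < \infty\}$ satisfies $\tau^{[1]}(\omega) \le K(n, \epsilon, q)$ for all $n$ larger than some threshold $N(\omega)$. Combining this with the Borel--Cantelli conclusion, for almost every $\omega \in \{\tau^{[1]} < \infty\}$ and all sufficiently large $n$, we have $|\tau^{[1]}(\omega) - \sigma^{[k]}_*(\omega; n)| \le \delta(n, \epsilon, q)$, and the right-hand side tends to $0$ as $n \to \infty$. This delivers almost sure convergence along integer values of $\gamma$.

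To upgrade to the continuous limit $\gamma \to \infty$, I would invoke the natural monotone coupling in which the high-frequency processes $\Thg^{(k)}$ are realized by superposing $\Thz^{(k)}$ with an increasing family of Poisson processes drawn from a common dominating Poisson random measure. Under this coupling $\sigma^{[1]}_*$ is non-increasing in $\gamma$ and bounded below by $\tau^{[1]}$, so the almost sure limit along the integer subsequence transfers directly to the continuous parameter. For $\sigma^{[2]}_*$, strict monotonicity can fail, but a sandwich between consecutive integer values of $\gamma$ combined with the density of the high-intensity grid closes the argument. The only real---and minor---subtlety lies in this final monotonicity/sandwich step; the Borel--Cantelli portion is immediate given the quantitative estimate of Theorem \ref{th:convergence}.
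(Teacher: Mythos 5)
Your Borel--Cantelli argument along integer values of $\gamma$ matches the paper's own proof precisely: the paper fixes $q=2$ (you allow any $q>1$, which is immaterial), observes that the probabilities are summable over $\gamma=1,2,\dots$, applies the first Borel--Cantelli lemma, and then uses the divergence of $K(\gamma,\epsilon,q)$ exactly as you do to drop the truncation event on $\{\tau^{[1]}<\infty\}$. So the core of your argument is the paper's argument.

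Where you go beyond the paper is the final step upgrading the limit from an integer sequence of intensities to the continuous parameter $\gamma\to\infty$. The paper does not raise this point at all, so you are not missing anything relative to the paper's treatment; you are being more careful. However, your sketch of that upgrade is incomplete for $\sigma^{[2]}_*$. Under the monotone Poisson coupling, $\sigma^{[1]}_*=\thg^{[1]}_{\ell_*}$ is simply the first arrival of $\Thg^{[1]}$ strictly exceeding $\tau^{[1]}$, so refinement of the grid makes it nonincreasing in $\gamma$ and bounded below by $\tau^{[1]}$; that case is clean. But $\sigma^{[2]}_*=\thg^{[2]}_{\ell_*}$ responds to an increase in $\gamma$ through two competing mechanisms --- the second grid gets finer (pushing $\thg^{[2]}_{\ell}$ down for each fixed $\ell$), while the index $\ell_*$ determined by the first grid gets larger --- and these effects work in opposite directions, so there is no monotonicity and no ready-made two-sided sandwich between consecutive integer intensities. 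Closing that last step genuinely requires a quantitative bound on $|\sigma^{[2]}_*(\gamma)-\tau^{[1]}|$ for $\gamma\in[n,n+1]$ in the coupled construction (for instance by re-running the triangle-inequality chain from the proof of Theorem~\ref{th:convergence} on the coupled grids), not merely the monotonicity/sandwich heuristic you describe. This is a real loose end in your write-up, though one the paper's own proof leaves equally open.
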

\begin{proof}
Take $q=2$ and $\epsilon\in (0,1)$. Then
\[\sum_{\gamma=1}^\infty \mathds{P}\left(|\tau^{[1]} - \sigma^{[k]}_*|> \delta(\gamma, \epsilon, q), \tau^{[1]}\le K(\gamma, \epsilon, q)\right) < \infty,\]
so that by the Borel-Cantelli lemma, 
\[\mathds{P}\left(|\tau^{[1]} - \sigma^{[k]}_*|> \delta(\gamma, \epsilon, q), \tau^{[1]}\le K(\gamma, \epsilon, q)\mbox{ occurs infinitely often for $\gamma=1,2,\dots$}\right)=0.\]
This in turn implies that 
\[\mathds{P}\left(\lim_{k\rightarrow\infty} \sigma^{[k]}_* \neq \tau^{[1]}, \tau^{[1]}<\infty\right)=0.\]
\end{proof}
Finally, to guarantee that the new behavioral switching times $\sigma^{[1]}$ and $\sigma^{[2]}$ satisfy Equation \eqref{eq:compatible}, we  define
\begin{align*}
    (\sigma^{[1]},\sigma^{[2]})&=\left\{\begin{array}{ccc} (\sigma^{[1]}_*,\sigma^{[2]}_*) & \mbox{if}& \sigma^{[1]}_*\in [\thz_{\ell^{(1)}}^{[1]}, \thz_{\ell^{(1)}+1}^{[1]}) \mbox{ and }\sigma^{[2]}_*\in [\thz_{\ell^{(2)}}^{[2]}, \thz_{\ell^{(2)}+1}^{[2]}),\\
    (\tau^{[1]},\tau^{[1]}) &&\mbox{otherwise}.
    \end{array}\right.
\end{align*}
Note that \eqref{eq:strongtau1} also holds if $\sigma^{[k]}_*$ is replaced by $\sigma^{[k]}$. Thus, employing \eqref{eq:distancecomp1}, we obtain that $\widetilde{F}^{[k]}$ converges uniformly (over $[0,\infty)$) to $F^{[k]}$.

Now that we can guarantee that the proxy $\widetilde{\tau}^{[1]}:=\sigma^{[1]}$ converges to the first ruin time $\tau^{[1]}$ in an almost sure sense as $\gamma\rightarrow\infty$, we define the approximation $\widetilde{\tau}^{[2]}$ to $\tau^{[2]}$ in a similar manner by taking
\begin{align*}
n_* = \inf\{n\ge 1 : \tilde{F}^{[2]}(\thg^{[2]}_n)<0\},\qquad \widetilde{\tau}^{[2]}=\thg^{[2]}_{n_*}.
\end{align*}
Below we present the (bivariate) convergence result which will be the basis of our forthcoming analysis.
\begin{theorem}\label{th:convergence2}
On $\tau^{[1]}, \tau^{[2]}<\infty$, the bivariate vectors $(\widetilde{\tau}^{[1]}, \widetilde{\tau}^{[2]})$ and $(\ell_*/\gamma, n_*/\gamma)$ converge in probability to $(\tau^{[1]}, \tau^{[2]})$ as $\gamma\rightarrow\infty$.
\end{theorem}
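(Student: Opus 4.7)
The plan is to establish marginal convergence in probability for each of the four scalar quantities $\widetilde{\tau}^{[1]}$, $\widetilde{\tau}^{[2]}$, $\ell_*/\gamma$, $n_*/\gamma$; since $\tau^{[1]},\tau^{[2]}$ are finite a.s.\ on the conditioning event $\{\tau^{[1]},\tau^{[2]}<\infty\}$, joint convergence to $(\tau^{[1]},\tau^{[2]})$ will follow from the marginals.

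For the first marginal, I would write $\widetilde{\tau}^{[1]}=\sigma^{[1]}\in\{\sigma^{[1]}_*,\tau^{[1]}\}$ and argue that the ``bad'' event $\{\sigma^{[1]}\neq\sigma^{[1]}_*\}$, which requires that for some $k\in\{1,2\}$ the arrival $\sigma^{[k]}_*$ falls outside $[\thz^{[k]}_{\ell^{(k)}},\thz^{[k]}_{\ell^{(k)}+1})$, has vanishing probability. The thinning decomposition $\Thg^{[k]}=\Thz^{[k]}\cup\Thh^{[k]}$ with intensities $\gamma_0$ and $\gamma-\gamma_0$ and the memoryless property give that the first arrival of $\Thg^{[k]}$ after $\tau^{[1]}$ belongs to $\Thh^{[k]}$ with probability $1-\gamma_0/\gamma$, so the bad event has probability $O(\gamma_0/\gamma)\to 0$. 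On its complement $\widetilde{\tau}^{[1]}=\sigma^{[1]}_*\to\tau^{[1]}$ almost surely by Corollary~\ref{cor:strongtau1}, giving $\widetilde{\tau}^{[1]}\to\tau^{[1]}$ in probability.

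For the second marginal, I would combine Theorem~\ref{th:compatibledistance} with Corollary~\ref{cor:strongtau1} and the elementary bound $|\sigma^{[k]}-\tau^{[1]}|\le|\sigma^{[k]}_*-\tau^{[1]}|$ to deduce $\sup_{t\ge 0}|\widetilde{F}^{[2]}(t)-F^{[2]}(t)|\to 0$ in probability. Since the first-passage laws of $F^{(1)}$ and $F^{(2)}$ have no atoms, $\tau^{[2]}>\tau^{[1]}$ a.s., so $F^{[2]}$ is in the switched regime at $\tau^{[2]}$ with strictly negative slope $\rho^{[2]}(J^{[2]}(\tau^{[2]}-))$. For $\epsilon>0$ small enough that no jump of $J^{[2]}$ falls in $(\tau^{[2]}-\epsilon,\tau^{[2]}+\epsilon)$ (an event of probability $\to 1$ as $\epsilon\downarrow 0$), linearity near $\tau^{[2]}$ combined with continuity of the positive function $F^{[2]}$ on the compact set $[0,\tau^{[2]}-\epsilon]$ produces a random $\delta>0$ satisfying $F^{[2]}\ge\delta$ on $[0,\tau^{[2]}-\epsilon]$ and $F^{[2]}\le-\delta$ on $[\tau^{[2]}+\epsilon/2,\tau^{[2]}+\epsilon]$; uniform convergence transfers these bounds to $\widetilde{F}^{[2]}$ for large $\gamma$, pinning the continuous first-passage time $\widehat{\tau}:=\inf\{t\ge 0:\widetilde{F}^{[2]}(t)<0\}$ into $[\tau^{[2]}-\epsilon,\tau^{[2]}+\epsilon]$. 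On the high-probability event $\sigma^{[2]}=\sigma^{[2]}_*$, every slope change of $\widetilde{F}^{[2]}$ belongs to $\Thz^{[2]}\cup\{\sigma^{[2]}\}\subset\Thg^{[2]}$, so $\widetilde{F}^{[2]}$ is linear between consecutive points of $\Thg^{[2]}$ and $\widetilde{\tau}^{[2]}$ is exactly the first such point strictly after $\widehat{\tau}$, lying within an $\mathrm{Exp}(\gamma)$ of it. This gives $\widetilde{\tau}^{[2]}\to\tau^{[2]}$ in probability.

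For the remaining two marginals, I would let $N^{[k]}$ denote the counting process of $\Thg^{[k]}$, note $\ell_*=N^{[1]}(\sigma^{[1]}_*)$ and $n_*=N^{[2]}(\widetilde{\tau}^{[2]})$, and decompose $N^{[k]}=N^{[k]}_0+N^{[k]}_h$ into counts of $\Thz^{[k]}$ (fixed intensity $\gamma_0$) and $\Thh^{[k]}$ (intensity $\gamma-\gamma_0$, independent of $\tau^{[1]}$ and $\tau^{[2]}$). The $N^{[k]}_0$ term, evaluated at arguments bounded in probability, satisfies $N^{[k]}_0/\gamma\to 0$ since $\gamma_0$ is fixed; the $N^{[k]}_h$ term satisfies $N^{[k]}_h(t)/\gamma\to t$ in probability at $t=\sigma^{[1]}_*$ and $t=\widetilde{\tau}^{[2]}$ by independence from the ruin times together with a Chebyshev estimate. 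Combining with the two preceding steps yields $\ell_*/\gamma\to\tau^{[1]}$ and $n_*/\gamma\to\tau^{[2]}$. The hardest step will be the $\widetilde{\tau}^{[2]}$ analysis: one cannot extract a deterministic positive lower bound for $F^{[2]}$ away from $\tau^{[2]}$ because the path may dip arbitrarily close to zero before ruin, so the argument must rely on continuity of $F^{[2]}$ on the random compact interval $[0,\tau^{[2]}-\epsilon]$ together with the non-zero slope assumption on $\rho^{[2]}$ to guarantee a transversal crossing at $\tau^{[2]}$.
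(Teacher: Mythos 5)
Your proposal arrives at the same conclusion, but takes a noticeably different route from the paper and, in one place, is actually more careful than the paper's own argument. The paper handles $\widetilde{\tau}^{[1]}\to\tau^{[1]}$ directly from Corollary~\ref{cor:strongtau1} (together with the remark that \eqref{eq:strongtau1} holds with $\sigma^{[k]}_*$ replaced by $\sigma^{[k]}$); it then obtains $\widetilde{\tau}^{[2]}\to\tau^{[2]}$ by invoking uniform path convergence of $\widetilde{F}^{[2]}$ to $F^{[2]}$ and asserting — without elaboration — that this forces the first entry time $\beta^{[2]}$ of $\widetilde{F}^{[2]}$ into $(-\infty,0)$ to converge to $\tau^{[2]}$, after which $\widetilde{\tau}^{[2]}-\beta^{[2]}\sim\mathrm{Exp}(\gamma)$ finishes. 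Your transversal-crossing argument (strictly negative slope $\rho^{[2]}$ at the ruin epoch, absence of jumps in a small window, the two-sided $\delta$-bounds on $[0,\tau^{[2]}-\epsilon]$ and $[\tau^{[2]}+\epsilon/2,\tau^{[2]}+\epsilon]$, then transfer to $\widetilde{F}^{[2]}$ by uniform convergence) is precisely the missing justification for that unstated implication, since uniform convergence of paths alone does not in general move first-passage times with it; this is a genuine improvement. For the $(\ell_*/\gamma,n_*/\gamma)$ marginals the paper uses a different and cleaner device: since $\widetilde{\tau}^{[k]}$ coincides with the $\ell_*$-th (resp.\ $n_*$-th) arrival of $\Thg^{[k]}$, it bounds $\bigl|\widetilde{\tau}^{[k]}-\ell_*/\gamma\bigr|\mathds{1}_{\ell_*\le\lfloor\gamma^{1+\epsilon}\rfloor}$ by $\max_{\ell\le\lfloor\gamma^{1+\epsilon}\rfloor}\bigl|\widetilde{\theta}^{[k]}_\ell-\ell/\gamma\bigr|$, which is already controlled by \eqref{eq:Poissonaux1}. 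Your decomposition $N^{[k]}=N^{[k]}_0+N^{[k]}_h$ with a Chebyshev estimate for $N^{[k]}_h$ works, but you should be explicit that $\sigma^{[1]}_*$ and $\widetilde{\tau}^{[2]}$ are \emph{not} independent of $\Thh^{[k]}$ — the clean fix is to compare with $N^{[k]}_h(\tau^{[1]})$ and $N^{[k]}_h(\tau^{[2]})$, which \emph{are} independent of $\Thh^{[k]}$, and control the increment over the shrinking random interval, exactly as you gesture at.

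One concrete error: your thinning argument that the bad event $\{(\sigma^{[1]},\sigma^{[2]})\neq(\sigma^{[1]}_*,\sigma^{[2]}_*)\}$ has probability $O(\gamma_0/\gamma)$ is only valid for the coordinate-$1$ constraint. For $k=1$, ``$\sigma^{[1]}_*\notin[\thz^{[1]}_{\ell^{(1)}},\thz^{[1]}_{\ell^{(1)}+1})$'' means precisely that the first arrival of $\Thg^{[1]}$ after $\tau^{[1]}$ belongs to $\Thz^{[1]}$, and the $\gamma_0/\gamma$ bound follows by the memoryless/competing-exponentials argument as you say. For $k=2$, however, $\sigma^{[2]}_*=\thg^{[2]}_{\ell_*}$ where $\ell_*$ is determined by the \emph{first} coordinate's Poisson grid and ruin time, so this is not ``the first arrival of $\Thg^{[2]}$ after $\tau^{[1]}$,'' and no single thinning step controls it. The correct justification is the synchronization of the two independent Poissonian grids provided by Theorem~\ref{th:convergence}/Corollary~\ref{cor:strongtau1}, which the paper records as \eqref{eq:probcomp1}; citing that equation gives you what you need without a quantitative rate of your own.
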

\begin{proof}
See Section~\ref{proof:th:convergence2}.
\end{proof}

We finalize this section by pointing out that, since $\tau^{[1]}$ is almost surely an interior point of $[\thz_{\ell^{(1)}}^{[1]}, \thz_{\ell^{[1]}+1}^{[1]}) \cap [\thz_{\ell^{[2]}}^{[2]}, \thz_{\ell^{(2)}+1}^{[2]})$, then \eqref{eq:strongtau1} implies that the probability of the set 
\begin{equation}\label{eq:probcomp1}\mathds{P}\left((\sigma^{[1]},\sigma^{[2]})\neq(\sigma^{[1]}_*,\sigma^{[2]}_*)\right)\rightarrow 0\quad\mbox{as}\quad \gamma\rightarrow\infty.\end{equation}
In other words, the Poissonian mechanism to track the new behavioral switches laid out by $\sigma^{[1]}_*$ and $\sigma^{[2]}_*$ satisfies Equation \eqref{eq:compatible} everywhere but in the asymptotically null set $\{(\sigma^{[1]},\sigma^{[2]})\neq(\sigma^{[1]}_*,\sigma^{[2]}_*)\}$. Moreover, ouside of this set, the process $(\widetilde{F}^{[1]},\widetilde{F}^{[2]})$ can be easily explained in terms of a uniformization scheme as follows. 

For $k=1,2$, underlying $\widetilde{F}^{[k]}$, there is a Poisson process $\Thg^{[k]}$ of parameter $\gamma$, which itself is the superposition of two independent Poisson processes, $\Thz^{[k]}$ and $\Thh^{[k]}$, of intensities $\gamma_0$ and $\gamma-\gamma_0$, respectively. At each  arrival point in $\Thg^{[k]}$, one of two things can occur: either it belongs to $\Theta_0$ and the process $\widetilde{J}^{[k]}$ switches state at such point according to the transition matrix $\bm{I}+\tfrac{1}{\gamma_0}\bm{A}^{[k]}$, or it belongs to $\Thz^{[k]}$ and it does not change state unless this arrival is exactly equal to $\sigma^{[k]}$, point at which $\widetilde{J}^{[k]}$ will switch states according to $\bm{P}^{[k]}$. Since the probability of an arrival belonging to $\Thz^{[k]}$ is $\gamma_0/\gamma$, and it belonging to $\Thh^{[k]}$ is $(\gamma-\gamma_0)/\gamma$, then we can regard all the arrivals in $\Thg$ (that are different from $\sigma^{[k]}$) as uniformization points where a transitions occur according to the matrix
\[\frac{\gamma_0}{\gamma}\left(\bm{I}+\frac{1}{\gamma_0}\bm{A}^{[k]}\right) + \frac{\gamma-\gamma_0}{\gamma}\bm{I}=\bm{I}+\frac{1}{\gamma}\bm{A}^{[k]}.\]
Moreover, the dependence between $\widetilde{F}^{[1]}$ and $\widetilde{F}^{[2]}$ exists only through $\ell_*$, the arrival number in $\Thg^{[1]}$ that triggers the behavioral switch for both components. Taking Theorem \ref{proof:th:convergence2} into account, we can then study the bivariate vector $(\tau^{[1]},\tau^{[2]})$ through $(\ell_*, n_*)$. In fact, for $x,y\ge 0$ and sufficiently large $\gamma$,
\begin{align}
&\mathds{P}(\tau^{[1]}\le x, \tau^{[2]}\le y, F^{[1]}= F^{(1)}\mbox{ and }F^{[2]}=F^{(2)})\nonumber\\
& \quad\approx \mathds{P}(\ell_*/\gamma\le x, n_*/\gamma\le y, \widetilde{F}^{[1]}= \widetilde{F}^{(1)}\mbox{ and }\widetilde{F}^{[2]}=\widetilde{F}^{(2)})\nonumber\\
& \quad = \sum_{\ell=1}^{\lfloor \gamma x\rfloor} \sum_{n=\ell}^{\lfloor \gamma y\rfloor} \mathds{P}(\ell_*=\ell, n_*=n,  \widetilde{F}^{[1]}= \widetilde{F}^{(1)}\mbox{ and }\widetilde{F}^{[2]}=\widetilde{F}^{(2)})\nonumber\\
& \quad = \sum_{\ell=1}^{\lfloor \gamma x\rfloor} \sum_{n=\ell}^{\lfloor \gamma y\rfloor} \mathds{P}(\mbox{Ruin of $\widetilde{F}^{(1)}$ is confirmed at $\thg^{(1)}_\ell$})\nonumber\\
&\qquad\qquad\qquad\times \mathds{P}( \mbox{Ruin of $\widetilde{F}^{(2)}$ is confirmed at $\thg^{(2)}_n$}\mid \mbox{behavioral switch of $\widetilde{F}^{(2)}$ occurs at $\thg^{(2)}_\ell$}).
\label{eq:ondime1}
\end{align}
 Remarkably, the expression (\ref{eq:ondime1}) offers an approximation scheme which rests on computing one-dimensional quantities only. Indeed, for each summand, the first element of the product corresponds to the first return probabilities of $F^{(1)}$ occurring just before the $\ell$-th Poissonian step. Meanwhile, the second term in the product corresponds to the first return probabilities of the process $F^{(2)}$ just before the $n$-th Poissonian step, conditioned on a behavioral switch occurring at the $\ell$-th point. As we will see in the following section, extending the ideas of \cite{bean2019finite,amini2023duration}, both quantities are algorithmically tractable to compute.

\section{First return probabilities}\label{sec:main}

By virtue of the results in the previous section,  the problem of approximating  the law of $(\tau^{[1]},\tau^{[2]})$ reduces to  computing one-dimensional ruin probabilities via \eqref{eq:ondime1} for the compatible pasting approximation of the original process. We can now proceed to develop the matrix-analytic methodology. In this section we relinquish all the superscripts to ease notation and fix some large $\gamma$. Quantity $\widetilde{F}$ will denote a \emph{univariate} stochastic fluid process, which has an associated jump process $\widetilde{J}$  evolving in a state space $\mathcal{E}\cup\mathcal{S}$ ($\mathcal{E}\cap\mathcal{S}=\emptyset$) with $\widetilde{F}$ having instantaneous rewards of the form $r:\mathcal{E}$, when $\widetilde{J}$ is in $\mathcal{E}$, and of the form $\rho:\mathcal{E}$ when $\widetilde{J}$ is in $\mathcal{S}$. Moreover, the process $\widetilde{J}$ evolves by means of uniformization on top of a Poisson process $\Thg=\{\thg_\ell\}_{\ell \ge 0}$ of parameter $\gamma$, with transitions within $\mathcal{E}$ governed by the matrix $\bm{B}_{\mathcal{E}\mathcal{E}}:=\bm{I} + \tfrac{1}{\gamma}\bm{A}_{\mathcal{E}\mathcal{E}}$, transitions within $\mathcal{S}$ governed by the matrix $\bm{B}_{\mathcal{S}\mathcal{S}}:=\bm{I} + \tfrac{1}{\gamma}\bm{A}_{\mathcal{S}\mathcal{S}}$, and transitions from $\mathcal{E}$ to $\mathcal{S}$ (triggered by an external behavioral switch at a Poissonian time in $\Thg$) governed by the matrix $\bm{B}_{\mathcal{E}\mathcal{S}}:=\bm{P}$.


Define the sets
\[\ME^{\pm}=\{i\in\ME: (\pm 1)r(i)>0\},\quad \MS^{\pm}=\{i\in\MS: (\pm 1)\rho(i)>0\}.\]
According to the previous classification, partition the transition matrices $\bm{B}_{\ME\ME}$, $\bm{B}_{\ME\MS}$ and $\bm{B}_{\MS\MS}$ into
\begin{equation}
\bm{B}_{\ME\ME}=\begin{pmatrix}\bm{B}_{\ME^+\ME^+}&\bm{B}_{\ME^+\ME^-}\\ \bm{B}_{\ME^-\ME^+}&\bm{B}_{\ME^-\ME^-}\end{pmatrix},\quad \bm{B}_{\ME\MS}=\begin{pmatrix}\bm{B}_{\ME^+\MS^+}&\bm{B}_{\ME^+\MS^-}\\ \bm{B}_{\ME^-\MS^+}&\bm{B}_{\ME^-\MS^-}\end{pmatrix},\quad\mbox{and}
\quad\bm{B}_{\MS\MS}=\begin{pmatrix}\bm{B}_{\MS^+\MS^+}&\bm{B}_{\MS^+\MS^-}\\ \bm{B}_{\MS^-\MS^+}&\bm{B}_{\MS^-\MS^-}\end{pmatrix}.\end{equation}
We now compute the first passage probability matrix of $\widetilde{F}$. Namely, we find 
\[\bm{\Psi}^{(\ell,n)} = \{\psi_{ij}^{(\ell,n)}\,:\, i\in\mathcal{E}^+\cup\mathcal{S}^+, j\in\mathcal{E}^-\cup\mathcal{S}^-\},\]
where
\[\psi_{ij}^{(\ell,n)}=\mathds{P}( \mbox{Ruin of $\widetilde{F}$ is confirmed at $\thg_n$, $\widetilde{J}(\thg_n-)=j$}\mid \mbox{Behavioral switch of $\widetilde{F}$ occurs at $\thg_\ell$}, \widetilde{J}(0)=i).\]
We stress that we allow for $\ell\in \{n, n+1, n+2,\dots\}$ and $\ell\in \{\dots,-2,-1,0\}$, with both events corresponding to a behavioral switch that occurs outside of $[0,\thg_n)$: in the former the switching time is larger or equal to $\thg_n$, and in the latter it is smaller than $0$. In both cases, the switching mechanism becomes irrelevant, since it occurs outside of the interval in question, and thus, it boils down to those instances where the process remains entirely in $\mathcal{E}$ or in $\mathcal{S}$. Particularly, 
\begin{equation}\label{eq:repeat1}\bm{\Psi}^{(n,n)}=\bm{\Psi}^{(n+1,n)}=\bm{\Psi}^{(n+2,n)}=\cdots\quad\mbox{and}\quad\bm{\Psi}^{(0,n)}=\bm{\Psi}^{(-1,n)}=\bm{\Psi}^{(-2,n)}=\cdots.\end{equation} We also note that all the components in \eqref{eq:ondime1} can be readily recovered from $\psi_{ij}^{(\ell,n)}$ by choosing the relevant parameters. Particularly,
\begin{align*}
\mathds{P}&(\mbox{Ruin of $\widetilde{F}$ is confirmed at $\thg_\ell$}| \widetilde{J}(0)=i)  = \sum_{j'\in \mathcal{E}} \psi_{ij'}^{(\ell,\ell)},\\
\mathds{P}&( \mbox{Ruin of $\widetilde{F}$ is confirmed at $\thg_n$}\mid \mbox{Behavioral switch of $\widetilde{F}$ occurs at $\thg_\ell$}, \widetilde{J}(0)=i)  = \sum_{j'\in \mathcal{S}} \psi_{ij'}^{(\ell,n)}.
\end{align*}
In the following we provide an algorithmic method to recursively compute the matrix $\bm{\Psi}^{(\ell, n)}$.

\begin{theorem}\label{th:main}
Define 
\begin{align*}
\bm{H}_{+-}&=\begin{pmatrix} \left\{\tfrac{1}{r(i) + |r(j)|}: i\in\ME^+, j\in\ME^-\right\} & \left\{\tfrac{1}{r(i) + |\rho(j)|}: i\in\ME^+, j\in\MS^-\right\}\\
\bm{0}& \left\{\tfrac{1}{\rho(i) + |\rho(j)|}: i\in\MS^+, j\in\MS^-\right\} \end{pmatrix},\\
\bm{R}_{-}&=\begin{pmatrix} \mathrm{diag}\left\{|r(j)|: j\in\ME^-\right\} &\bm{0}\\
\bm{0}&\mathrm{diag}\left\{|r(j)|: j\in\MS^-\right\} \end{pmatrix},\\
\bm{R}_{+}&=\begin{pmatrix} \mathrm{diag}\left\{r(j): i\in\ME^+\right\} &\bm{0}\\
\bm{0}&\mathrm{diag}\left\{r(i): i\in\MS^+\right\}
\end{pmatrix}.
\end{align*}
For $n\ge 0$ and $\ell\in\{\dots,-2,-1,0,1,2,\dots\}$,
\begin{align*}
\bm{\Psi}^{(\ell,n)}=\bm{Q}^{(\ell,n)}\bm{R}_-,
\end{align*}
where $\bm{Q}^{(\ell, 2)}  =  \bm{B}_{+-}^{(\ell,2)}\odot \bm{H}_{+-}$ with  \begin{align*}
\bm{B}_{+-}^{(\ell,2)}&=\begin{pmatrix}\bm{B}_{\ME^+\ME^-}\mathds{1}_{\ell>1}&\bm{B}_{\ME^+\MS^-}\mathds{1}_{\ell=1}\\\bm{0}&\bm{B}_{\MS^+\MS^-}\mathds{1}_{\ell<1}\end{pmatrix},
\end{align*}
and, for $n\geq 3$,  $\bm{Q}^{(\ell,n)}$ is computed in a recursive manner w.r.t. $n$ by
\begin{align*} 
\bm{Q}^{(\ell, n)} & = \left(\bm{B}_{++}^{(\ell,n)}\,\bm{Q}^{(\ell-1, n-1)}\,\bm{R}_{-}\right)\odot \bm{H}_{+-}\\
&\quad + \sum_{w=2}^{n-2}\left( \bm{R}_{+}\,\bm{Q}^{(\ell, w)}\,\bm{B}_{-+}^{(\ell,n,w)}\, \bm{Q}^{(\ell-w, n-w)}\,\bm{R}_{-}\right)\odot \bm{H}_{+-}\\
&\quad + \left( \bm{R}_{+}\,\bm{Q}^{(\ell, n-1)}\,\bm{B}_{--}^{(\ell,n)}\right)\odot \bm{H}_{+-},
\end{align*}
where 
\begin{align*}
\bm{B}_{++}^{(\ell,n)}&=\begin{pmatrix}\bm{B}_{\ME^+\ME^+}\mathds{1}_{\ell>1}&\bm{B}_{\ME^+\MS^+}\mathds{1}_{\ell=1}\\ \bm{0}&\bm{B}_{\MS^+\MS^+}\mathds{1}_{\ell<1}\end{pmatrix}, \ \ 
\bm{B}_{--}^{(\ell,n)}=\begin{pmatrix}\bm{B}_{\ME^-\ME^-}\mathds{1}_{\ell>n-1}&\bm{B}_{\ME^-\MS^-}\mathds{1}_{\ell=n-1}\\ \bm{0}&\bm{B}_{\MS^-\MS^-}\mathds{1}_{\ell<n-1}\end{pmatrix}
\end{align*}
and
\begin{align*}
\bm{B}_{-+}^{(\ell,n,w)}&=\begin{pmatrix}\bm{B}_{\ME^-\ME^+}\mathds{1}_{\ell>w}&\bm{B}_{\ME^-\MS^+}\mathds{1}_{\ell=w}\\ \bm{0}&\bm{B}_{\MS^-\MS^+}\mathds{1}_{\ell<w}\end{pmatrix}.
\end{align*}
Here, $\odot$ denotes the Hadamard product of matrices (i.e., entrywise multiplication between matrices).
\end{theorem}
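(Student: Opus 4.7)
The plan is to proceed by induction on $n$, combining a last-passage-type decomposition of the paths of $\widetilde{F}$ (in the spirit of the matrix-analytic recursions of \cite{bean2019finite,amini2023duration}) with a careful bookkeeping of the single Poisson step $\ell$ at which the behavioral switch is prescribed to occur.

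For the base case $n=2$ I would work directly on the path. Starting in state $i\in\ME^+\cup\MS^+$ at level $0$, the fluid ascends during the first inter-arrival time $T_1\sim\mathrm{Exp}(\gamma)$, transitions to some state $j$ at $\thg_1$, and then moves at the rate attached to $j$ during $T_2\sim\mathrm{Exp}(\gamma)$. Ruin is confirmed at $\thg_2$ precisely when $j\in\ME^-\cup\MS^-$ and the descent over $T_2$ exceeds the ascent over $T_1$; since $T_1$ and $T_2$ are independent exponentials of common rate $\gamma$, a direct computation gives $|r(j)|/(r(i)+|r(j)|)$ (replacing $r$ by $\rho$ in the obvious way when the state belongs to $\MS$), which is exactly the $(i,j)$-entry of $\bm{H}_{+-}\bm{R}_-$. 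The first transition probability is then carried by the appropriate block of $\bm{B}_{+-}^{(\ell,2)}$: the indicators $\mathds{1}_{\ell>1},\mathds{1}_{\ell=1},\mathds{1}_{\ell<1}$ select respectively $\bm{B}_{\ME^+\ME^-}$, $\bm{B}_{\ME^+\MS^-}$, or $\bm{B}_{\MS^+\MS^-}$, according to whether the switch is still to come, happens exactly at step $1$, or has already occurred. Assembling these ingredients produces $\bm{\Psi}^{(\ell,2)}=(\bm{B}_{+-}^{(\ell,2)}\odot\bm{H}_{+-})\bm{R}_-$.

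For $n\ge 3$, with the statement assumed for all smaller values, I would decompose ``ruin confirmed at $\thg_n$ in state $j$'' according to the state $k=\widetilde{J}(\thg_1)$ reached at the first Poisson arrival and, when $k$ is downward, according to the step $w$ at which the path first returns to its initial level from above. Three disjoint scenarios arise. \emph{(i)} If $k\in\ME^+\cup\MS^+$, the post-$\thg_1$ trajectory is itself a fresh first-passage problem of length $n-1$; the switch index shifts from $\ell$ to $\ell-1$ because the origin of time is moved to $\thg_1$, and the first-step transition is encoded by $\bm{B}_{++}^{(\ell,n)}$, producing the first term. \emph{(ii)} If $k\in\ME^-\cup\MS^-$ and the return to the initial level occurs at step $w\in\{2,\dots,n-2\}$, then by the strong Markov property at $\thg_w$ the path decomposes independently into a complete first excursion of length $w$ (captured by $\bm{Q}^{(\ell,w)}$), a gluing transition at step $w$ from a downward state to an upward state (encoded by $\bm{B}_{-+}^{(\ell,n,w)}$), and a fresh first-passage of length $n-w$ from the new upward state (captured by $\bm{Q}^{(\ell-w,n-w)}$, with the switch index again shifted by $w$)---this is the summation term. \emph{(iii)} If the path does not revisit the initial level before the final Poisson interval $(\thg_{n-1},\thg_n)$, the final transition lies within the downward block at step $n-1$ via $\bm{B}_{--}^{(\ell,n)}$ and everything beforehand is summarised by $\bm{Q}^{(\ell,n-1)}$, yielding the third term. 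The framing matrices $\bm{R}_+$ and $\bm{R}_-$ that sandwich the middle product arise from the standard weighting argument of matrix-analytic decompositions: when two excursions are glued at a common level, the local density of visits at that level is proportional to the absolute value of the fluid rate of the incoming (or outgoing) state. The ubiquitous Hadamard product with $\bm{H}_{+-}$ accounts, exactly as in the base case, for the probability that the last Poisson inter-arrival interval is long enough for the descending segment to actually cross the target level.

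The main difficulty I anticipate is the combinatorial bookkeeping of the switch indicators. At each transition place in the decomposition---the first step in scenario \emph{(i)}, the gluing step $w$ in scenario \emph{(ii)}, and the final step $n-1$ in scenario \emph{(iii)}---one must verify that the trichotomy $\{\mathds{1}_{\ell>m},\mathds{1}_{\ell=m},\mathds{1}_{\ell<m}\}$ built into $\bm{B}_{++}^{(\ell,n)}$, $\bm{B}_{-+}^{(\ell,n,w)}$ and $\bm{B}_{--}^{(\ell,n)}$ activates exactly the correct block ($\bm{B}_{\ME\ME}$, $\bm{B}_{\ME\MS}=\bm{P}$ or $\bm{B}_{\MS\MS}$) compatible with the position of $\ell$ relative to that step, and that the shifts $\ell\mapsto\ell-1$ and $\ell\mapsto\ell-w$ introduced when moving the time origin are consistently matched across the three pieces. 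The identities in \eqref{eq:repeat1} serve as a useful sanity check when $\ell$ falls outside $\{1,\dots,n\}$. Once this consistency is in place, summing the three contributions, extracting the common right factor $\bm{R}_-$, and invoking the inductive hypothesis on $\bm{Q}^{(\ell-1,n-1)}$, $\bm{Q}^{(\ell,w)}$, $\bm{Q}^{(\ell-w,n-w)}$ and $\bm{Q}^{(\ell,n-1)}$ yields the recursion claimed in the statement.
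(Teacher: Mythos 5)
Your overall blueprint matches the paper's: induction on $n$, a base case at $n=2$ computed by direct exponential integration, and an inductive step that splits the event into three scenarios mapping to the three terms of the recursion, with indicator bookkeeping for the switch step. However, there is a genuine gap in the decomposition you use for the inductive step, and it is exactly the place where the paper's notion of an $n$-bridge does the work.

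You propose to partition ``ruin confirmed at $\thg_n$'' by the state $k=\widetilde J(\thg_1)$ at the first Poisson arrival and, when $k$ is downward, by a ``first return'' step $w$. This is not a partition that closes the recursion. Consider a path with $k\in\ME^+\cup\MS^+$ whose level later dips strictly below $\widetilde F(\thg_1)$ but stays above $0$ before crossing below $0$ at $\thg_n$ (for instance $\widetilde F(\thg_1)=1$, $\widetilde F(\thg_2)=2$, $\widetilde F(\thg_3)=0.5$). This path belongs to the first-passage event, yet the post-$\thg_1$ trajectory is not an $(n-1)$-bridge and hence is not accounted for by the factor $\bm{Q}^{(\ell-1,n-1)}$ in your scenario \emph{(i)}; it is also excluded from your scenarios \emph{(ii)} and \emph{(iii)}, since those require $k$ downward. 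What the paper actually conditions on is the location $w\in\{1,\ldots,n-1\}$ of the \emph{global minimum} of the intermediate levels $\widetilde F(\thg_1),\ldots,\widetilde F(\thg_{n-1})$ (the event $E_w$). The case $w=1$ is strictly stronger than ``$k$ upward'', and $w\ge 2$ does not entail ``$k$ downward''; only the minimum-location decomposition makes the two pieces on $[\thg_0,\thg_w]$ and $[\thg_w,\thg_n]$ automatically shorter bridges, allowing the inductive hypothesis to apply. A second, more technical omission is the level-density factorization: the paper proves, as part of the same induction, that the density of $\widetilde F(\thg_n)$ restricted to the $(\ell,n)$-bridge event factors as $\bm{Q}^{(\ell,n)}$ times a diagonal exponential in the terminal level. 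Your Hadamard-product remark implicitly relies on exactly this property, but it has to be stated and established together with the recursion for the argument to close.
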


\begin{proof}
See Section~\ref{sec:proof:th:main}.
\end{proof}

Besides proposing novel formulae for the first return probabilities of stochastic fluid processes that present behavioral switching, we highlight that the method proposed in Theorem \ref{th:main} is particularly simple and tractable. The $n$-level of the algorithm is completely specified by the previous levels in $\{2,3,\dots,n-1\}$, all in terms of matrix and Hadamard multiplications, which contrasts with other expensive methods that are common in the literature, such as matrix exponentiation, inversion and Riccati solutions (see eg. \cite{latouche2018analysis} for more details). Furthermore, by \eqref{eq:repeat1}, for the $n$-level there are at most $n+1$ different matrices $\bm{\Psi}^{(\ell,n)}$,  corresponding to the cases $\ell\in\{0,1,2,\dots, n\}$.


\section{Proofs}\label{sec:proofs}
This section includes the proofs of Theorem~\ref{th:convergence}, Theorem~\ref{th:convergence2} and Theorem~\ref{th:main}.
\subsection{Proof of Theorem~\ref{th:convergence}}\label{proof:th:convergence}

Let us borrow the technical result from \cite[Lemma 1]{bladt2022strongly}, where it was used in a different context for time-inhomogeneous Markov jump processes. Such a result states that  there exists some $C(\epsilon,q)>0$ such that
\begin{align}
\mathds{P}\left(G_{(k,\lambda)} \right)=o(\gamma^{-q})\quad\mbox{for}\quad G_{(k,\lambda)} = \left\{\max_{\ell\in\{1,\dots,\lfloor \gamma^{1+\epsilon}\rfloor\}}\left|\widetilde{\theta}^{[k]}_\ell - \ell/\gamma\right| \ge C(\epsilon,q)(\log \gamma)\gamma^{-1/2+\epsilon/2}\right\},\label{eq:Poissonaux1}
\end{align}
\begin{align}
\mathds{P}\left(G_{(*,\lambda)}\right)=o(\gamma^{-q})\quad\mbox{for}\quad G_{(*,\lambda)} = \left\{\max_{\ell\in\{1,\dots,\lfloor \gamma^{1+\epsilon}\rfloor\}}\left|\widetilde{\theta}^{[1]}_\ell - \widetilde{\theta}^{[2]}_\ell\right| \ge 2C(\epsilon,q)(\log \gamma)\gamma^{-1/2+\epsilon/2}\right\}.\label{eq:Poissonaux2}
\end{align}
These equations establish the results on the distance between the Poissonian grid and the fixed grid and respectively between the two independent Poissonian grids.
We now exploit this to compute the distance between the  random times $\tau^{[1]}$ and $\sigma^{[1]}_*$ via triangle inequality considerations. 
This is done via the set inclusion
\begin{align}
& D_{\gamma}\cap E_{\gamma} \subseteq \left(D_{\gamma}\cap E_{\gamma}\cap G_{(1, \lambda)}^c\cap G_{(2,\lambda)}^c \cap G_{(*,\lambda)}^c\right) \cup G_{(1, \lambda)}\cup G_{(2,\lambda)} \cup G_{(*,\lambda)},\label{eq:inclusion1}
\end{align}
where
\begin{align*}
D_\gamma=\{|\tau^{[1]} - \sigma^{[1]}_*|> \delta(\gamma, \epsilon, q)\}, \quad E_\gamma=\{\tau^{[1]}\le K(\gamma, \epsilon, q)\}.
\end{align*}


To be more precise, we will show that for an appropriate choice of $\delta(\gamma, \epsilon, q)$ and $K(\gamma, \epsilon, q)$, the set $D_{\gamma}\cap E_{\gamma}\cap G_{(1, \lambda)}^c\cap G_{(2,\lambda)}^c \cap G_{(*,\lambda)}^c$ is empty, so that \eqref{eq:Poisson1} follows from \eqref{eq:Poissonaux1}, \eqref{eq:Poissonaux2}, \eqref{eq:inclusion1} and the subadditivity property of $\mathds{P}$.

First, note that on $G_{(1, \lambda)}^c\cap G_{(2,\lambda)}^c$, both $\widetilde{\theta}^{[1]}_{\lfloor \gamma^{1+\epsilon}\rfloor}$ and $\widetilde{\theta}^{[2]}_{\lfloor \gamma^{1+\epsilon}\rfloor}$ are strictly larger than $(\lfloor \gamma^{1+\epsilon}\rfloor/\gamma) - C(\epsilon,q)(\log \gamma)\gamma^{-1/2+\epsilon/2}=:K(\gamma,\epsilon,q)$. Thus,
\begin{align}
E_\gamma\cap G_{(1, \lambda)}^c\cap G_{(2,\lambda)}^c\subseteq \{\tau^{[1]}< \theta^{[k]}_{\lfloor \gamma^{1+\epsilon}\rfloor} \mbox{ for }k=1,2\}\cap G_{(1, \lambda)}^c\cap G_{(2,\lambda)}^c.\label{eq:inclusionaux1}
\end{align}
Now, the distance between $\tau^{[1]}$ and $\sigma_*^{[1]}$ is less or equal than the distance between $\widetilde{\theta}^{[1]}_{\ell_*-1}$ and $\widetilde{\theta}^{[1]}_{\ell_*}$. On $\{\tau^{[1]}< \theta^{[k]}_{\lfloor \gamma^{1+\epsilon}\rfloor} \mbox{ for }k=1,2\}$, the random variable $\ell_*$ is guaranteed to be smaller than $\lfloor \gamma^{1+\epsilon}\rfloor$. Thus, $\left|\widetilde{\theta}^{[1]}_{\ell_*} - \widetilde{\theta}^{[1]}_{\ell_*-1}\right|$ is bounded by the greatest distance $\left|\widetilde{\theta}^{[1]}_\ell - \widetilde{\theta}^{[1]}_{\ell-1}\right|$ for $\ell\le \lfloor \gamma^{1+\epsilon}\rfloor$, leading to
\begin{align}
|\tau^{[1]} - \sigma^{[1]}_*| &\le \max_{\ell\in\{1,\dots,\lfloor \gamma^{1+\epsilon}\rfloor\}}\left|\widetilde{\theta}^{[1]}_\ell - \widetilde{\theta}^{[1]}_{\ell-1}\right|\nonumber\\&\le \max_{\ell\in\{1,\dots,\lfloor \gamma^{1+\epsilon}\rfloor\}}\left(\left|\widetilde{\theta}^{[1]}_\ell - \ell/\gamma\right| + \left|\ell/\gamma - (\ell-1)/\gamma\right| + \left|(\ell-1)/\gamma - \widetilde{\theta}^{[1]}_{\ell-1}\right| \right)\nonumber\\
& \le 2 \max_{\ell\in\{1,\dots,\lfloor \gamma^{1+\epsilon}\rfloor\}}\left|\widetilde{\theta}^{[1]}_\ell - \ell/\gamma\right| + 1/\gamma.\label{eq:inequalitykappa}
\end{align}
Following similar steps, it is readily verified that on the set $\{\tau^{[1]}< \theta^{[k]}_{\lfloor \gamma^{1+\epsilon}\rfloor} \mbox{ for }k=1,2\}$, 
\begin{align}
|\tau^{[1]} - \sigma^{[2]}_*|&=|\tau^{[1]} - \sigma^{[1]}_*| + |\sigma^{[1]}_*-\sigma^{[2]}_*|\nonumber\\
&=|\tau^{[1]} - \sigma^{[1]}_*| + |\widetilde{\theta}^{[1]}_{\ell_*}-\widetilde{\theta}^{[2]}_{\ell_*}|\nonumber\\
&\le \left( 2 \max_{\ell\in\{1,\dots,\lfloor \gamma^{1+\epsilon}\rfloor\}}\left|\widetilde{\theta}^{[1]}_\ell - \ell/\gamma\right| + 1/\gamma \right) + \left(\max_{\ell\in\{1,\dots,\lfloor \gamma^{1+\epsilon}\rfloor\}}\left|\widetilde{\theta}^{[1]}_\ell - \widetilde{\theta}^{[2]}_{\ell}\right|\right).\label{eq:inequalitykappac}
\end{align}
Thus, on $E_{\gamma}\cap G_{(1, \lambda)}^c\cap G_{(2,\lambda)}^c \cap G_{(*,\lambda)}^c$, employing \eqref{eq:inclusionaux1}, \eqref{eq:inequalitykappa} and  \eqref{eq:inequalitykappac} we get for $k=1,2$
\begin{align*}
|\tau^{[1]} - \sigma^{[k]}_*|&\le \left( 2 \max_{\ell\in\{1,\dots,\lfloor \gamma^{1+\epsilon}\rfloor\}}\left|\widetilde{\theta}^{[1]}_\ell - \ell/\gamma\right| + 1/\gamma \right) + \left( 2 \max_{\ell\in\{1,\dots,\lfloor \gamma^{1+\epsilon}\rfloor\}}\left|\widetilde{\theta}^{[2]}_\ell - \ell/\gamma\right| + 1/\gamma \right)\\
&\qquad+ \left(\max_{\ell\in\{1,\dots,\lfloor \gamma^{1+\epsilon}\rfloor\}}\left|\widetilde{\theta}^{[1]}_\ell - \widetilde{\theta}^{[2]}_{\ell}\right|\right)\\
& \le 6C(\epsilon,q)(\log \gamma)\gamma^{-1/2+\epsilon/2} + 2/\gamma =: \delta(\gamma,\epsilon,q).
\end{align*}
This implies that $D_\gamma\cap E_{\gamma}\cap G_{(1, \lambda)}^c\cap G_{(2,\lambda)}^c \cap G_{(*,\lambda)}^c$ is an empty set, completing the proof.
\begin{Remark}
Note that the convergence in probability of $\sigma^{[1]}_*$ to $\tau^{[1]}$ can be shown by simpler methods, namely, by writing $\sigma^{[1]}_*$ as $\tau^{[1]}+e_{\gamma}$ for $e_\gamma\sim\mbox{Exp}(\gamma)$ and employing convergence in distribution result (this is in fact the avenue we pursue in the proof of Theorem~\ref{th:convergence2}). However, the convergence of $\sigma_*^{[2]}$ to $\tau^{[1]}$ is more challenging since $\sigma_*^{[2]}$ cannot be decomposed as a sum of a stopping time and an exponential random variable. 
\end{Remark}

\subsection{Proof of Theorem~\ref{th:convergence2}}\label{proof:th:convergence2}
The convergence in probability of $\widetilde{\tau}^{[1]}$ to $\tau^{[1]}$ follows from the strong convergence result in Corollary \ref{cor:strongtau1}. The almost sure convergence of $\sigma^{[2]}$ to $\tau^{[1]}$ implies, together with Theorem \ref{th:compatibledistance}, that the paths of $\widetilde{F}^{[2]}$ converge almost surely to those of $F^{[2]}$ uniformly over $[0,\infty)$. This in turn implies that the first entry time $\tau^{[2]}$ for ${F}^{[2]}$ is the almost sure limit of the first entry time to $(-\infty, 0)$, say $\beta^{[2]}$, for  $\widetilde{F}^{[2]}$. By the memoryless property of the exponential distribution, we know that $\widetilde{\tau}^{[2]}-\beta^{[2]}\sim \mbox{Exp}(\gamma)$. Consequently, $|\widetilde{\tau}^{[2]}-\beta^{[2]}|$ converges in distribution to $0$ as $\gamma\rightarrow \infty$, which can be upgraded to convergence in probability (since the limit point is a constant). In summary,
\begin{align*}
&|\widetilde{\tau}^{[1]}-\tau^{[1]}|\rightarrow 0\quad\mbox{in probability as $\gamma\rightarrow\infty$,}\\
&|\widetilde{\tau}^{[2]}-\tau^{[2]}|\le |\widetilde{\tau}^{[2]}-\beta^{[2]}| + |\beta^{[2]}-{\tau}^{[2]}|\rightarrow 0\quad\mbox{in probability as $\gamma\rightarrow\infty$,}
\end{align*}
yielding the convergence in probability of $(\widetilde{\tau}^{[1]}, \widetilde{\tau}^{[2]})$ to $({\tau}^{[1]}, {\tau}^{[2]})$. Also,
\begin{align*}
\left|\widehat{\tau}^{[1]}-\frac{\ell_*}{\gamma}\right|\mathds{1}_{\ell_* \le \lfloor \gamma^{1+\epsilon}\rfloor}\le \max_{\ell\in\{1,\dots,\lfloor \gamma^{1+\epsilon}\rfloor\}}\left|\widetilde{\theta}^{[1]}_\ell - \ell/\gamma\right|\rightarrow 0\quad\mbox{almost surely as $\gamma\rightarrow\infty$,}\\
\left|\widehat{\tau}^{[2]}-\frac{n_*}{\gamma}\right|\mathds{1}_{n_* \le \lfloor \gamma^{1+\epsilon}\rfloor}\le \max_{\ell\in\{1,\dots,\lfloor \gamma^{1+\epsilon}\rfloor\}}\left|\widetilde{\theta}^{[2]}_\ell - \ell/\gamma\right|\rightarrow 0\quad\mbox{almost surely as $\gamma\rightarrow\infty$,}
\end{align*}
which in turn yields the convergence in probability of $(\ell_*/\gamma,n_*/\gamma$ to $({\tau}^{[1]}, {\tau}^{[2]})$.
\subsection{Proof of Theorem~\ref{th:main}}\label{sec:proof:th:main}

Similar to \cite{amini2023duration}, we consider the concept of $n$-bridges, defined as the collection of paths of $\widetilde{F}$ such that, when restricted to its first $n$ observations, the lowest points of these paths are represented by the  two endpoints. These paths resemble a bridge that stands on its endpoints. Contrasting with \cite{amini2023duration}, we have two distinctions:  the underlying environmental process is neither duration dependent, nor time-inhomogeneous (significantly simplifying the computations), and the paths considered for an $n$-bridge in this context must undergo a behavioral switch at the $\ell$ observation. More specifically, define the set of $n$-bridges (with an $\ell$-behavioral switch)
\[\Omega_{\ell,n}=\left\{\max\left\{\widetilde{F}(\thg_{0}), \widetilde{F}(\thg_n)\right\}<\min\left\{\widetilde{F}(\thg_1),\dots, \widetilde{F}(\thg_{n-1})\right\}\right\}\cap \left\{\mbox{Behavioural switch occurs at $\thg_\ell$}\right\}.\]

The cases under consideration depend on decomposing each $n$-bridge into shorter bridges and accordingly translating the behavioral switch within each of these bridges. Moreover, we need to thoughtfully account for the additional case where the switch occurs in the first or second bridge of the decomposition.
The level density of an $n$-bridge that switches at the $\ell$-th observation is defined as
\begin{align*}
\lambda_{ij}^{(\ell,n)}(s)& :=\frac{\partial L_{ij}^{(n)}(s)}{\partial s},\quad L_{ij}^{(\ell,n)}(s):= \mathds{P}\left(\Omega_{\ell, n},\,J(\thg_{n}-)=j,\, \widetilde{F}(\thg_{n})\le s\mid \widetilde{J}(0)=i\right):
\end{align*}
particularly, note that 
\begin{equation}\label{eq:psiint1}\bm{\psi}_{ij}^{(\ell,n)}=\int_{-\infty}^0 \lambda_{ij}^{(\ell,n)}(s)\dd s.
\end{equation} 
Furthermore, for $n\ge 3$ and $w\in\{1,2,\dots, n-1\}$, define
\[E_{w}=\left\{ \widetilde{F}(\thg_w) < \min_{m\in\{ 1,\dots, n-1\}\setminus\{ w\}} \widetilde{F}(\thg_m)\right\};\]
this set represents the paths for which the decomposition in shorter bridges occurs at $w$-th observation. We define the corresponding densities
\begin{align*}
\Gamma_{ij}^{(\ell,n,w)}(s)& :=\frac{\partial G_{ij}^{(\ell,n,w)}(s)}{\partial s},\quad G_{ij}^{(n,w)}(s):= \mathds{P}\left(\Omega_{\ell, n},\,E_w,\,J(\thg_{n}-)=j,\, \widetilde{F}(\thg_{n})\le s\mid \widetilde{J}(0)=i\right).
\end{align*}
represent level densities on the event that decomposition in shorter bridges occurs at $w$-th observation.
Our aim is to find a recursive algorithm that comprise the matrices
\begin{align*}
\bm{\Lambda}^{(\ell,n)}_{\ME^+\ME^-}(s) = \{\lambda^{(\ell,n)}_{ij}(s)\}_{i\in \ME^+, j\in \ME^-}, \quad \bm{\Gamma}^{(\ell,n,w)}_{\ME^+\ME^-}(s) = \{\Gamma^{(\ell,n,w)}_{ij}(s)\}_{i\in \ME^+, j\in \ME^-},\\
\bm{\Lambda}^{(\ell,n)}_{\ME^+\MS^-}(s) = \{\lambda^{(\ell,n)}_{ij}(s)\}_{i\in \ME^+, j\in \MS^-}, \quad \bm{\Gamma}^{(\ell,n,w)}_{\ME^+\MS^-}(s) = \{\Gamma^{(\ell,n,w)}_{ij}(s)\}_{i\in \ME^+, j\in \MS^-}, \\
\bm{\Lambda}^{(\ell,n)}_{\MS^+\MS^-}(s) = \{\lambda^{(\ell,n)}_{ij}(s)\}_{i\in \MS^+, j\in \MS^-}, \quad \bm{\Gamma}^{(\ell,n,w)}_{\MS^+\MS^-}(s) = \{\Gamma^{(\ell,n,w)}_{ij}(s)\}_{i\in \MS^+, j\in \MS^-}.
\end{align*}
which in turn provides us a way to recursively compute $\bm{\Psi}^{(\ell,n)}$ through \eqref{eq:psiint1}. Moreover, note that for $n\ge 3$,
\begin{align}
\bm{\Lambda}^{(\ell,n)}_{\ME^+\ME^-}(s) & =\sum_{w=1}^{n-1} \bm{\Gamma}^{(\ell,n,w)}_{\ME^+\ME^-}(s),\label{eq:LambdasumGamma1}\\
\bm{\Lambda}^{(\ell,n)}_{\ME^+\MS^-}(s) & =\sum_{w=1}^{n-1}  \bm{\Gamma}^{(\ell,n,w)}_{\ME^+\MS^-}(s),\label{eq:LambdasumGamma2} \\
\bm{\Lambda}^{(\ell,n)}_{\MS^+\MS^-}(s) & =\sum_{w=1}^{n-1}  \bm{\Gamma}^{(\ell,n,w)}_{\MS^+\MS^-}(s),\label{eq:LambdasumGamma3}
\end{align}
so that characterizing $\bm{\Gamma}^{(\ell,n,w)}_{\ME^+\ME^-}(s)$, $\bm{\Gamma}^{(\ell,n,w)}_{\ME^+\MS^-}(s)$ and $\bm{\Gamma}^{(\ell,n,w)}_{\MS^+\MS^-}(s)$ is enough.

We aim to analyze all the possibilities on a case-by-case basis. For the sake of enlisting all the cases, we briefly describe them next.
\begin{enumerate}
  \item \textbf{Case $n=2$.} The $n$-paths only experience one upwards and then one downards movement. We differentiate between different $\ell$-behavioral switching as follows:
  \begin{tasks}(3)
        \task[(a)] \textbf{Case $\ell\ge 2$.}
        \task[(b)] \textbf{Case $\ell\le 0$.}
        \task[(c)] \textbf{Case $\ell = 1$.}
    \end{tasks}
  \item \textbf{Case $n\ge 3$.} For these $n$-paths, besides classificating with regards to the $\ell$-behavioral switching, we also differentiate between different cases for $w$, which is what dictates how the $n$-paths get decomposed into smaller paths. The blueprint looks as follows.
  \begin{enumerate}[leftmargin=3mm]
        \item \textbf{Case $\ell\ge n$:}
        \begin{tasks}(3)
          \task[i.] \textbf{Case $w=1$.}
          \task[ii.] \textbf{Case $w=n-1$.}
          \task[iii.] \textbf{Case $w\in\{1,\dots, n-1\}$.}
        \end{tasks}
        \item \textbf{Case $\ell \le 0$:}
        \begin{tasks}(3)
           \task[i.]  \textbf{Case $w=1$.}
           \task[ii.]  \textbf{Case $w=n-1$.}
           \task[iii.]  \textbf{Case $w\in\{1,\dots, n-1\}$.}
        \end{tasks}
        \item \textbf{Case $\ell\in\{1,\dots, n-1\}$:}
        \begin{enumerate}
          \item \textbf{Case $w=1$:}
            \begin{tasks}(2)
              \task[A.] \textbf{Case $\ell=w$.}
              \task[B.] \textbf{Case $\ell>w$.}
            \end{tasks}
          \item \textbf{Case $w=n-1$:}
          \begin{tasks}(2)
             \task[A.] \textbf{Case $\ell<w$.}
             \task[B.] \textbf{Case $\ell=w$.}
            \end{tasks}
          \item \textbf{Case $w\in\{1,\dots, n-1\}$:}
          \begin{tasks}(3)
              \task[A.] \textbf{Case $\ell<w$.}
              \task[B.] \textbf{Case $\ell=w$.}
              \task[C.] \textbf{Case $\ell>w$.}
            \end{tasks}
        \end{enumerate}
    \end{enumerate}
\end{enumerate}


We now explore each case in detail.

\begin{enumerate}[leftmargin=5.5mm]
    \item \textbf{Case $n=2$.}
    \begin{enumerate}[leftmargin=3mm]
        \item \textbf{Case $\ell\ge 2$.} By the scaling property of the exponential distribution, the first upward movement will have a length (in space) of distribution $\mbox{Exp}(\gamma/r(i))$, a jump to state $j$ at time $\thg_1$ occurs with probability $B_{ij}$, and then the subsequent downward movement will have a length (in space) of distribution $\mbox{Exp}(\gamma/|r(j)|)$. Thus, for $i\in\ME^+$ and $j\in\ME^-$,
    \begin{align*}
    \lambda^{(\ell,n)}_{ij}(s) & = B_{ij}\int_{0\vee s}^{\infty} \tfrac{\gamma}{r(i)}e^{-\tfrac{\gamma}{r(i)}u} \tfrac{\gamma}{|r(j)|}e^{-\tfrac{\gamma}{|r(j)|}(u-s)}\dd u\\
    & = B_{ij}\frac{\tfrac{\gamma}{r(i)}\tfrac{\gamma}{|r(j)|}}{\tfrac{\gamma}{r(i)}+\tfrac{\gamma}{|r(j)|}}e^{\tfrac{\gamma}{|r(j)|}s}e^{-\left(\tfrac{\gamma}{r(i)}+\tfrac{\gamma}{|r(j)|}\right) 0\vee s}\\
    & = \left\{\begin{array}{ccc}B_{ij} \frac{1}{r(i)+|r(j)|}\left(\gamma e^{-\tfrac{\gamma}{r(i)}s}\right)&\mbox{if}&s\ge 0,\\
    B_{ij} \frac{1}{r(i)+|r(j)|}\left(\gamma e^{\tfrac{\gamma}{|r(j)|}s}\right)&\mbox{if}&s< 0.
    \end{array}\right.
    \end{align*}
    In matrix terms, for $n=2$ we can write
    \begin{align}
    \bm{\Lambda}^{(\ell,n)}_{\ME^+\ME^-}(s)= \left\{\begin{array}{ccc}\bm{\Delta}_{\ME^+}(s)\,\bm{Q}^{(\ell,n)}_{\ME^+\ME^-}&\mbox{if}&s\ge 0,\\
    \bm{Q}^{(\ell,n)}_{\ME^+\ME^-}\bm{\Delta}_{\ME^-}(s)&\mbox{if}&s< 0,\end{array}\right.\label{eq:MatrixLambda1}
    \end{align}
    where for $n=2$ and $\ell\ge 2$,
    \begin{align*}
    \bm{Q}^{(\ell,n)}_{\ME^+\ME^-} & = \bm{B}_{\ME^+\ME^-}\odot \bm{H}_{\ME^+\ME^-},\\
    \bm{H}_{\ME^+\ME^-}& =\left\{\tfrac{1}{r(i) + |r(j)|}\right\}_{i\in\ME^+, j\in\ME^-}, \\ 
    \bm{\Delta}_{\ME^+}(s)& =\mbox{diag}\left\{\gamma e^{-\tfrac{\gamma}{r(i)}s}: i\in\ME^+\right\}, \\
    \bm{\Delta}_{\ME^-}(s)& =\mbox{diag}\left\{\gamma e^{\tfrac{\gamma}{|r(j)|}s}: j\in\ME^-\right\}.
    \end{align*}
    The key observation is that $\bm{\Lambda}^{(\ell,n)}_{\ME^+\ME^-}(s)$ can be written as a product of a matrix that only depends on the level $s$, and a matrix that depends only on the size of the bridge and the time of the switch. 
     \item \textbf{Case $\ell\le 0$.} In this case, the first upward movement will have a length (in space) of distribution $\mbox{Exp}(\gamma/\rho(i))$, a jump to state $j$ at time $\thg_1$ occurs with probability $B_{ij}$, and then the subsequent downward movement will have a length (in space) of distribution $\mbox{Exp}(\gamma/|\rho(j)|)$. Following the aforementioned algebraic steps, we arrive at the matrix expression:
    \begin{align}
    \bm{\Lambda}^{(\ell,n)}_{\MS^+\MS^-}(s)= \left\{\begin{array}{ccc}\bm{\Delta}_{\MS^+}(s)\,\bm{Q}^{(\ell,n)}_{\MS^+\MS^-}&\mbox{if}&s\ge 0,\\
    \bm{Q}^{(\ell,n)}_{\MS^+\MS^-}\bm{\Delta}_{\MS^-}(s)&\mbox{if}&s< 0,\end{array}\right.\label{eq:MatrixLambda3}
    \end{align}
    where for $n=2$ and $\ell\ge 2$,
    \begin{align*}
    \bm{Q}^{(\ell,n)}_{\MS^+\MS^-} & = \bm{B}_{\MS^+\MS^-}\odot \bm{H}_{\MS^+\MS^-},\\
    \bm{H}_{\MS^+\MS^-}& =\left\{\tfrac{1}{\rho(i) + |\rho(j)|}\right\}_{i\in\MS^+, j\in\MS^-}, \\ 
    \bm{\Delta}_{\MS^+}(s)& =\mbox{diag}\left\{\gamma e^{-\tfrac{\gamma}{\rho(i)}s}: i\in\MS^+\right\}, \\
    \bm{\Delta}_{\MS^-}(s)& =\mbox{diag}\left\{\gamma e^{\tfrac{\gamma}{|\rho(j)|}s}: j\in\MS^-\right\}.
    \end{align*}
    \item \textbf{Case $\ell=1$.} In this case, the first upward movement will have a length (in space) of distribution $\mbox{Exp}(\gamma/r(i))$, a jump to state $j$ at time $\thg_1$ occurs with probability $B_{ij}$, and then the subsequent downward movement will have a length (in space) of distribution $\mbox{Exp}(\gamma/|\rho(j)|)$. Following the aforementioned algebraic steps, we arrive at the matrix expression:
    \begin{align}
    \bm{\Lambda}^{(\ell,n)}_{\ME^+\MS^-}(s)= \left\{\begin{array}{ccc}\bm{\Delta}_{\ME^+}(s)\,\bm{Q}^{(\ell,n)}_{\ME^+\MS^-}&\mbox{if}&s\ge 0,\\
    \bm{Q}^{(\ell,n)}_{\ME^+\MS^-}\bm{\Delta}_{\MS^-}(s)&\mbox{if}&s< 0,\end{array}\right. \label{eq:MatrixLambda2}
    \end{align}
    where for $n=2$ and $\ell\ge 2$,
    \begin{align*}
    \bm{Q}^{(\ell,n)}_{\ME^+\MS^-} & = \bm{B}_{\ME^+\MS^-}\odot \bm{H}_{\ME^+\MS^-},\\
    \bm{H}_{\ME^+\MS^-}& =\left\{\tfrac{1}{r(i) + |\rho(j)|}\right\}_{i\in\ME^+, j\in\MS^-}, \\ 
    \bm{\Delta}_{\ME^+}(s)& =\mbox{diag}\left\{\gamma e^{-\tfrac{\gamma}{r(i)}s}: i\in\ME^+\right\}, \\
    \bm{\Delta}_{\MS^-}(s)& =\mbox{diag}\left\{\gamma e^{\tfrac{\gamma}{|\rho(j)|}s}: j\in\MS^-\right\}.
    \end{align*}
    \end{enumerate}
    It turns out that the decomposition shown in (\ref{eq:MatrixLambda1}), (\ref{eq:MatrixLambda2}) and (\ref{eq:MatrixLambda3}) holds for general $(\ell,n)$, which we verify recursively as we go along in our proof. As an induction step, suppose that these decompositions hold for all $(\ell',n')$  with $\ell'\ge 0$ and $n'\in \{1, 2, 3, \dots, n-1\}$.
    \item \textbf{Case $n\ge 3$.}
    \begin{enumerate}[leftmargin=3mm]
        \item \textbf{Case $\ell\ge n$.}
        In this case, the switching is inconsequential to the ruin of $\widetilde{F}$, whose underlying process $J$ evolves within $\ME$ in the time interval $[0,\thg_n)$.
        \begin{enumerate}[leftmargin=3mm]
        \item\textbf{Case $w=1$.} 
        Conditioning on the value of $\widetilde{F}(\thg_1)$, which is $\mbox{Exp}(\gamma/r(i))$-distributed, we get:
        \begin{align*}
           \Gamma^{(\ell,n,w)}_{ij}(s) = \int_{0\vee s} \tfrac{\gamma}{r(i)} e^{-\tfrac{\gamma}{r(i)} u} \sum_{i'\in\ME^+} B_{ii'} \lambda_{i'j}^{(\ell-1, n-1)}(s - u)\dd u.
        \end{align*}
        Employing the decomposition (\ref{eq:MatrixLambda1}) for the case $(\ell-1,n-1)$,
        \begin{align*}
            \Gamma^{(\ell,n,w)}_{ij}(s) & = \int_{0\vee s}^\infty \tfrac{\gamma}{r(i)} e^{-\tfrac{\gamma}{r(i)} u} \sum_{i'\in\ME^+} B_{ii'} Q_{i'j}^{(\ell-1, n-1)} (\gamma e^{\tfrac{\gamma}{|r(j)|}(s - u)})\dd u\\
            & =   \sum_{i'\in\ME^+} B_{ii'} Q_{i'j}^{(\ell-1, n-1)}|r(j)| \int_{0\vee s}^\infty \tfrac{\gamma}{r(i)} e^{-\tfrac{\gamma}{r(i)} u} \tfrac{\gamma}{|r(j)|} e^{-\tfrac{\gamma}{|r(j)|}(u-s)}\dd u\\
            &=\left\{\begin{array}{ccc} \sum_{i'\in\ME^+} B_{ii'} Q_{i'j}^{(\ell-1, n-1)}|r(j)| \left\{\frac{1}{r(i)+|r(j)|}\left(\gamma e^{-\tfrac{\gamma}{r(i)}s}\right)\right\}&\mbox{if}&s\ge 0,\\
            \sum_{i'\in\ME^+} B_{ii'} Q_{i'j}^{(\ell-1, n-1)}|r(j)| \left\{\frac{1}{r(i)+|r(j)|}\left(\gamma e^{\tfrac{\gamma}{|r(j)|}s}\right)\right\}&\mbox{if}&s< 0.
    \end{array}\right.
        \end{align*}
        In matrix terms, this can be expressed as 
        \begin{align*}
        \bm{\Gamma}^{(\ell,n,w)}_{\ME^+\ME^-}(s)= \left\{\begin{array}{ccc}\bm{\Delta}_{\ME^+}(s) \left\{\left(\bm{B}_{\ME^+\ME^+}\,\bm{Q}_{\ME^+\ME^-}^{(\ell-1, n-1)}\,\bm{R}_{\ME^-}\right)\odot \bm{H}_{\ME^+\ME-}\right\}&\mbox{if}&s \ge 0,\\
        \left\{\left(\bm{B}_{\ME^+\ME^+}\,\bm{Q}_{\ME^+\ME^-}^{(\ell-1, n-1)}\,\bm{R}_{\ME^-}\right)\odot \bm{H}_{\ME^+\ME^-}\right\} \bm{\Delta}_{\ME^-}(s)&\mbox{if}&s < 0,
        \end{array}\right.
            \end{align*}
            where
            \[\bm{R}_{\ME^-}=\mbox{diag}\left\{|r(j)|: j\in\ME^-\right\}.\]
             \item\textbf{Case $w=n-1$.} 
        Conditioning on the value of $\widetilde{F}(\thg_{n-1})-\widetilde{F}(\thg_n)$, which is $\mbox{Exp}(\gamma/|r(j)|)$-distributed, we get:
        \begin{align*}
           \Gamma^{(\ell,n,w)}_{ij}(s) = \int_{0\vee s}^\infty  \sum_{j'\in\ME^-} \lambda_{ij'}^{(\ell, n-1)}(u) B_{j'j} \left(\tfrac{\gamma}{|r(j)|} e^{-\tfrac{\gamma}{|r(j)|} (u-s)} \right)\dd u.
        \end{align*}
        Employing the decomposition (\ref{eq:MatrixLambda1}) for the case $(\ell,n-1)$,
        \begin{align*}
            \Gamma^{(\ell,n,w)}_{ij}(s) &= \int_{0\vee s}^\infty  \sum_{j'\in\ME^-} \left(\gamma e^{\tfrac{\gamma}{r(i)}s}\right) Q_{ij'}^{(\ell, n-1)}\, B_{j'j} \left(\tfrac{\gamma}{|r(j)|} e^{-\tfrac{\gamma}{|r(j)|} (u-s)} \right)\dd u\\
            & =  \sum_{j'\in\ME^-} r(i)\, Q_{ij'}^{(\ell, n-1)}\, B_{j'j} \int_{0\vee s}^\infty \tfrac{\gamma}{r(i)} e^{-\tfrac{\gamma}{r(i)} u} \tfrac{\gamma}{|r(j)|} e^{-\tfrac{\gamma}{|r(j)|}(u-s)}\dd u\\
            &=\left\{\begin{array}{ccc} \sum_{j'\in\ME^-} r(i)\, Q_{ij'}^{(\ell, n-1)}\, B_{j'j} \left\{\frac{1}{r(i)+|r(j)|}\left(\gamma e^{-\tfrac{\gamma}{r(i)}s}\right)\right\}&\mbox{if}&s\ge 0,\\
            \sum_{j'\in\ME^-} r(i)\, Q_{ij'}^{(\ell, n-1)}\, B_{j'j} \left\{\frac{1}{r(i)+|r(j)|}\left(\gamma e^{\tfrac{\gamma}{|r(j)|}s}\right)\right\}&\mbox{if}&s< 0.
    \end{array}\right.
        \end{align*}
        In matrix terms, this can be expressed as 
        \begin{align*}
        \bm{\Gamma}^{(\ell,n,w)}_{\ME^+\ME^-}(s)= \left\{\begin{array}{ccc}\bm{\Delta}_{\ME^+}(s) \left\{\left( \bm{R}_{\ME^+}\,\bm{Q}_{\ME^+\ME^-}^{(\ell, n-1)}\,\bm{B}_{\ME^-\ME^-}\right)\odot \bm{H}_{\ME^+\ME-}\right\}&\mbox{if}&s \ge 0,\\
        \left\{\left( \bm{R}_{\ME^+}\,\bm{Q}_{\ME^+\ME^-}^{(\ell, n-1)}\,\bm{B}_{\ME^-\ME^-}\right)\odot \bm{H}_{\ME^+\ME^-}\right\} \bm{\Delta}_{\ME^-}(s)&\mbox{if}&s < 0,
        \end{array}\right.
            \end{align*}
            where
            \[\bm{R}_{\ME^-}=\mbox{diag}\left\{|r(j)|: j\in\ME^-\right\}.\]
        \item\textbf{Case $w\in\{2,\dots, n-2\}$. }(Ignore this case if $n=3$) 
        Conditioning on the value of $\widetilde{F}(\thg_{w})$, we get:
        \begin{align*}
           \Gamma^{(\ell,n,w)}_{ij}(s) = \int_{0\vee s}^\infty  \sum_{j'\in\ME^-}\sum_{i'\in\ME^+} \lambda_{ij'}^{(\ell, w)}(u) B_{j'i'} \lambda_{i'j}^{(\ell-w, n-w)}(s-u) \dd u.
        \end{align*}
        Employing the decomposition (\ref{eq:MatrixLambda1}) for the cases $(\ell,w)$ and $(\ell-w,n-w)$,
        \begin{align*}
            & \Gamma^{(\ell,n,w)}_{ij}(s) = \int_{0\vee s}^\infty  \sum_{j'\in\ME^-}\sum_{i'\in\ME^+} \left(\gamma e^{\tfrac{\gamma}{r(i)}s}\right) Q_{ij'}^{(\ell, w)} B_{j'i'} Q_{i'j}^{(\ell-w, n-w)}\left(\gamma e^{\tfrac{\gamma}{|r(j)|}(s-u)}\right) \dd u\\
            & =  \sum_{j'\in\ME^-} \sum_{i'\in\ME^+} r(i)\,Q_{ij'}^{(\ell, w)} B_{j'i'} Q_{i'j}^{(\ell-w, n-w)}\,|r(j)| \int_{0\vee s}^\infty \tfrac{\gamma}{r(i)} e^{-\tfrac{\gamma}{r(i)} u} \tfrac{\gamma}{|r(j)|} e^{-\tfrac{\gamma}{|r(j)|}(u-s)}\dd u\\
            &=\left\{\begin{array}{ccc}  \sum_{j'\in\ME^-} \sum_{i'\in\ME^+} r(i)\,Q_{ij'}^{(\ell, w)} B_{j'i'} Q_{i'j}^{(\ell-w, n-w)}\,|r(j)| \left\{\frac{1}{r(i)+|r(j)|}\left(\gamma e^{-\tfrac{\gamma}{r(i)}s}\right)\right\}&\mbox{if}&s\ge 0,\\
             \sum_{j'\in\ME^-} \sum_{i'\in\ME^+} r(i)\,Q_{ij'}^{(\ell, w)} B_{j'i'} Q_{i'j}^{(\ell-w, n-w)}\,|r(j)| \left\{\frac{1}{r(i)+|r(j)|}\left(\gamma e^{\tfrac{\gamma}{|r(j)|}s}\right)\right\}&\mbox{if}&s< 0.
    \end{array}\right.
        \end{align*}
        In matrix terms, this can be expressed as 
        \begin{align*}
        \bm{\Gamma}^{(\ell,n,w)}_{\ME^+\ME^-}(s)= \left\{\begin{array}{ccc}\bm{\Delta}_{\ME^+}(s) \left\{\left( \bm{R}_{\ME^+}\,\bm{Q}_{\ME^+\ME^-}^{(\ell, w)}\,\bm{B}_{\ME^-\ME^+}\, \bm{Q}_{\ME^+\ME^-}^{(\ell-w, n-w)}\,\bm{R}_{\ME^-}\right)\odot \bm{H}_{\ME^+\ME-}\right\}&\mbox{if}&s \ge 0,\\
        \left\{\left( \bm{R}_{\ME^+}\,\bm{Q}_{\ME^+\ME^-}^{(\ell, w)}\,\bm{B}_{\ME^-\ME^+}\, \bm{Q}_{\ME^+\ME^-}^{(\ell-w, n-w)}\,\bm{R}_{\ME^-}\right)\odot \bm{H}_{\ME^+\ME-}\right\} \bm{\Delta}_{\ME^-}(s)&\mbox{if}&s < 0.
        \end{array}\right.
            \end{align*}
        \end{enumerate}
        \item \textbf{Case $\ell\le 0$.}
        In this case, the switching is inconsequential to the ruin of $\widetilde{F}$, whose underlying process $\widetilde{J}$ evolves within $\MS$ in the time interval $[0,\infty)$. The proof is very much the same as the case $\ell\ge n$, except that here we switch $\ME$ to $\MS$, and $r$ to $\rho$; for this reason, we just write the compact matrix form. In the following we allow $\ell$ to be negative, with the understanding that it coincides with the $\ell=0$ case.
        \begin{enumerate}[leftmargin=3mm]
        \item\textbf{Case $w=1$.} 
        \begin{align*}
        \bm{\Gamma}^{(\ell,n,w)}_{\MS^+\MS^-}(s)= \left\{\begin{array}{ccc}\bm{\Delta}_{\MS^+}(s) \left\{\left(\bm{B}_{\MS^+\MS^+}\,\bm{Q}_{\MS^+\MS^-}^{(\ell-1, n-1)}\,\bm{R}_{\MS^-}\right)\odot \bm{H}_{\MS^+\MS-}\right\}&\mbox{if}&s \ge 0,\\
        \left\{\left(\bm{B}_{\MS^+\MS^+}\,\bm{Q}_{\MS^+\MS^-}^{(\ell-1, n-1)}\,\bm{R}_{\MS^-}\right)\odot \bm{H}_{\MS^+\MS^-}\right\} \bm{\Delta}_{\MS^-}(s)&\mbox{if}&s < 0,
        \end{array}\right.
            \end{align*}
            where
            \[\bm{R}_{\MS^-}=\mbox{diag}\left\{|\rho(j)|: j\in\MS^-\right\}.\]
        \item\textbf{Case $w=n-1$.} 
        \begin{align*}
        \bm{\Gamma}^{(\ell,n,w)}_{\MS^+\MS^-}(s)= \left\{\begin{array}{ccc}\bm{\Delta}_{\MS^+}(s) \left\{\left( \bm{R}_{\MS^+}\,\bm{Q}_{\MS^+\MS^-}^{(\ell, n-1)}\,\bm{B}_{\MS^-\MS^-}\right)\odot \bm{H}_{\MS^+\MS-}\right\}&\mbox{if}&s \ge 0,\\
        \left\{\left( \bm{R}_{\MS^+}\,\bm{Q}_{\MS^+\MS^-}^{(\ell, n-1)}\,\bm{B}_{\MS^-\MS^-}\right)\odot \bm{H}_{\MS^+\MS^-}\right\} \bm{\Delta}_{\MS^-}(s)&\mbox{if}&s < 0,
        \end{array}\right.
            \end{align*}
            where
            \[\bm{R}_{\MS^-}=\mbox{diag}\left\{|\rho(j)|: j\in\MS^-\right\}.\]
        \item\textbf{Case $w\in\{2,\dots, n-2\}$.} (Ignore this case if $n=3$) 
        \begin{align*}
        \bm{\Gamma}^{(\ell,n,w)}_{\MS^+\MS^-}(s)= \left\{\begin{array}{ccc}\bm{\Delta}_{\MS^+}(s) \left\{\left( \bm{R}_{\MS^+}\,\bm{Q}_{\MS^+\MS^-}^{(\ell, w)}\,\bm{B}_{\MS^-\MS^+}\, \bm{Q}_{\MS^+\MS^-}^{(\ell-w, n-w)}\,\bm{R}_{\MS^-}\right)\odot \bm{H}_{\MS^+\MS-}\right\}&\mbox{if}&s \ge 0,\\
        \left\{\left( \bm{R}_{\MS^+}\,\bm{Q}_{\MS^+\MS^-}^{(\ell, w)}\,\bm{B}_{\MS^-\MS^+}\, \bm{Q}_{\MS^+\MS^-}^{(\ell-w, n-w)}\,\bm{R}_{\MS^-}\right)\odot \bm{H}_{\MS^+\MS-}\right\} \bm{\Delta}_{\MS^-}(s)&\mbox{if}&s < 0.
        \end{array}\right.
            \end{align*}
        \end{enumerate}
        \item \textbf{Case $\ell\in\{1,2,\dots, n-1\}$}. We proceed in a similar manner as in the previous cases by considering different cases for $w$.
        \begin{enumerate}[leftmargin=3mm]
        \item\textbf{Case $w=1$.} Here we have two further subcases.
        \begin{enumerate}[leftmargin=3mm]
            \item \textbf{Case $\ell=w$.} In this case, the first jump at $\thg_1$ causes a switch from $\ME$ to $\MS$. By analogous steps we get:
            \begin{align*}
            \bm{\Gamma}^{(\ell,n,w)}_{\ME^+\MS^-}(s)= \left\{\begin{array}{ccc}\bm{\Delta}_{\ME^+}(s) \left\{\left(\bm{B}_{\ME^+\MS^+}\,\bm{Q}_{\MS^+\MS^-}^{(\ell-1, n-1)}\,\bm{R}_{\MS^-}\right)\odot \bm{H}_{\ME^+\MS-}\right\}&\mbox{if}&s \ge 0,\\
            \left\{\left(\bm{B}_{\ME^+\MS^+}\,\bm{Q}_{\MS^+\MS^-}^{(\ell-1, n-1)}\,\bm{R}_{\MS^-}\right)\odot \bm{H}_{\ME^+\MS^-}\right\} \bm{\Delta}_{\MS^-}(s)&\mbox{if}&s < 0.
            \end{array}\right.
               \end{align*}
            \item \textbf{Case $\ell>w$.} In this case, the switch from $\ME$ to $\MS$ occurs $\ell-1$ steps away from $\thg_1$. Thus:
            \begin{align*}
            \bm{\Gamma}^{(\ell,n,w)}_{\ME^+\MS^-}(s)= \left\{\begin{array}{ccc}\bm{\Delta}_{\ME^+}(s) \left\{\left(\bm{B}_{\ME^+\ME^+}\,\bm{Q}_{\ME^+\MS^-}^{(\ell-1, n-1)}\,\bm{R}_{\MS^-}\right)\odot \bm{H}_{\ME^+\MS-}\right\}&\mbox{if}&s \ge 0,\\
            \left\{\left(\bm{B}_{\ME^+\ME^+}\,\bm{Q}_{\ME^+\MS^-}^{(\ell-1, n-1)}\,\bm{R}_{\MS^-}\right)\odot \bm{H}_{\ME^+\MS^-}\right\} \bm{\Delta}_{\MS^-}(s)&\mbox{if}&s < 0.
            \end{array}\right.
            \end{align*}
        \end{enumerate}
        \item\textbf{Case $w=n-1$.} Two cases as well
        \begin{enumerate}[leftmargin=3mm]
          \item \textbf{Case $\ell<w$.} 
            \begin{align*}
            \bm{\Gamma}^{(\ell,n,w)}_{\ME^+\MS^-}(s)= \left\{\begin{array}{ccc}\bm{\Delta}_{\ME^+}(s) \left\{\left( \bm{R}_{\ME^+}\,\bm{Q}_{\ME^+\MS^-}^{(\ell, n-1)}\,\bm{B}_{\MS^-\MS^-}\right)\odot \bm{H}_{\ME^+\MS-}\right\}&\mbox{if}&s \ge 0,\\
            \left\{\left( \bm{R}_{\ME^+}\,\bm{Q}_{\ME^+\MS^-}^{(\ell, n-1)}\,\bm{B}_{\MS^-\MS^-}\right)\odot \bm{H}_{\ME^+\MS^-}\right\} \bm{\Delta}_{\MS^-}(s)&\mbox{if}&s < 0,
            \end{array}\right.
            \end{align*}
            \item \textbf{Case $\ell=w$.} 
            \begin{align*}
            \bm{\Gamma}^{(\ell,n,w)}_{\ME^+\MS^-}(s)= \left\{\begin{array}{ccc}\bm{\Delta}_{\ME^+}(s) \left\{\left( \bm{R}_{\ME^+}\,\bm{Q}_{\ME^+\ME^-}^{(\ell, n-1)}\,\bm{B}_{\ME^-\MS^-}\right)\odot \bm{H}_{\ME^+\MS-}\right\}&\mbox{if}&s \ge 0,\\
            \left\{\left( \bm{R}_{\ME^+}\,\bm{Q}_{\ME^+\ME^-}^{(\ell, n-1)}\,\bm{B}_{\ME^-\MS^-}\right)\odot \bm{H}_{\ME^+\MS^-}\right\} \bm{\Delta}_{\MS^-}(s)&\mbox{if}&s < 0,
            \end{array}\right.
            \end{align*}
            \end{enumerate}

        \item\textbf{Case $w\in\{2,\dots, n-2\}$.} (Ignore this case if $n=3$) Here we have three cases.
        \begin{enumerate}[leftmargin=3mm]
            \item \textbf{Case $\ell<w$}
            \begin{align*}
            \bm{\Gamma}^{(\ell,n,w)}_{\ME^+\MS^-}(s)= \left\{\begin{array}{ccc}\bm{\Delta}_{\ME^+}(s) \left\{\left( \bm{R}_{\ME^+}\,\bm{Q}_{\ME^+\MS^-}^{(\ell, w)}\,\bm{B}_{\MS^-\MS^+}\, \bm{Q}_{\MS^+\MS^-}^{(\ell-w, n-w)}\,\bm{R}_{\MS^-}\right)\odot \bm{H}_{\ME^+\MS-}\right\}&\mbox{if}&s \ge 0,\\
            \left\{\left( \bm{R}_{\ME^+}\,\bm{Q}_{\ME^+\MS^-}^{(\ell, w)}\,\bm{B}_{\MS^-\MS^+}\, \bm{Q}_{\MS^+\MS^-}^{(\ell-w, n-w)}\,\bm{R}_{\MS^-}\right)\odot \bm{H}_{\ME^+\MS-}\right\} \bm{\Delta}_{\MS^-}(s)&\mbox{if}&s < 0.
            \end{array}\right.
            \end{align*}
            \item \textbf{Case $\ell=w$}
            \begin{align*}
            \bm{\Gamma}^{(\ell,n,w)}_{\ME^+\MS^-}(s)= \left\{\begin{array}{ccc}\bm{\Delta}_{\ME^+}(s) \left\{\left( \bm{R}_{\ME^+}\,\bm{Q}_{\ME^+\ME^-}^{(\ell, w)}\,\bm{B}_{\ME^-\MS^+}\, \bm{Q}_{\MS^+\MS^-}^{(\ell-w, n-w)}\,\bm{R}_{\MS^-}\right)\odot \bm{H}_{\ME^+\MS-}\right\}&\mbox{if}&s \ge 0,\\
            \left\{\left( \bm{R}_{\ME^+}\,\bm{Q}_{\ME^+\ME^-}^{(\ell, w)}\,\bm{B}_{\ME^-\MS^+}\, \bm{Q}_{\MS^+\MS^-}^{(\ell-w, n-w)}\,\bm{R}_{\MS^-}\right)\odot \bm{H}_{\ME^+\MS-}\right\} \bm{\Delta}_{\MS^-}(s)&\mbox{if}&s < 0.
            \end{array}\right.
            \end{align*}
            \item \textbf{Case $\ell>w$}
            \begin{align*}
            \bm{\Gamma}^{(\ell,n,w)}_{\ME^+\MS^-}(s)= \left\{\begin{array}{ccc}\bm{\Delta}_{\ME^+}(s) \left\{\left( \bm{R}_{\ME^+}\,\bm{Q}_{\ME^+\ME^-}^{(\ell, w)}\,\bm{B}_{\ME^-\ME^+}\, \bm{Q}_{\ME^+\MS^-}^{(\ell-w, n-w)}\,\bm{R}_{\MS^-}\right)\odot \bm{H}_{\ME^+\MS-}\right\}&\mbox{if}&s \ge 0,\\
            \left\{\left( \bm{R}_{\ME^+}\,\bm{Q}_{\ME^+\ME^-}^{(\ell, w)}\,\bm{B}_{\ME^-\ME^+}\, \bm{Q}_{\ME^+\MS^-}^{(\ell-w, n-w)}\,\bm{R}_{\MS^-}\right)\odot \bm{H}_{\ME^+\MS-}\right\} \bm{\Delta}_{\MS^-}(s)&\mbox{if}&s < 0.
            \end{array}\right.
            \end{align*}
        \end{enumerate}
        \end{enumerate}
    \end{enumerate}
\end{enumerate}

Employing (\ref{eq:LambdasumGamma1}), (\ref{eq:LambdasumGamma2}) and (\ref{eq:LambdasumGamma3}), all the cases for $s<0$ can be summarized in matrix form:
\begin{align}
\begin{pmatrix}
\bm{\Lambda}^{(\ell,n)}_{\ME^+\ME^-}(s) & \bm{\Lambda}^{(\ell,n)}_{\ME^+\MS^-}(s)\\ \bm{0} & \bm{\Lambda}^{(\ell,n)}_{\MS^+\MS^-}(s)
\end{pmatrix} = \bm{Q}^{(\ell,n)}\begin{pmatrix}\bm{\Delta}_{\ME^-}(s) & \bm{0}\\\bm{0}&\bm{\Delta}_{\MS^-}(s)\label{eq:decomposition1}
\end{pmatrix},
\end{align}
where $\bm{Q}^{(\ell,n)}$ is the block matrix defined recursively in the statement of Theorem \ref{th:main}. Finally, 
\[\bm{\Psi}^{(\ell, n)} = \bm{Q}^{(\ell,n)} \left(\int_{-\infty}^0 \begin{pmatrix}\bm{\Delta}_{\ME^-}(s) & \bm{0}\\\bm{0}&\bm{\Delta}_{\MS^-}(s)
\end{pmatrix} \dd s \right) = \bm{Q}^{(\ell,n)} \bm{R}_-,\]
which completes the proof.

\begin{Remark}Equation \eqref{eq:decomposition1} reveals a unique property of the density associated with the $n$-bridges: the structure of the $n$-bridge, encapsulated by the matrix $\bm{Q}^{(\ell,n)}$, is \emph{independent} of the level that this $n$-bridge reaches at its right endpoint, which is of an exponentially-distributed nature. Although this might initially seem counterintuitive, it can be heuristically explained by the notion that the endpoints of an $n$-bridge are competing exponential random variables (in opposite directions). Therefore, once the underlying states of the endpoints are established, the actual level reached by the right endpoint simply follows a two-sided exponential distribution. This striking property illuminates further applications of $n$-bridges.
\end{Remark}

\section{Concluding remarks and further applications}\label{sec:conclusion}

We proposed a model for bivariate stochastic fluid processes that incorporates a ruin-dependent behavioral switch. We have introduced a class of approximations, called compatible pastings. This allows for the development of a tractable bivariate model that retains the essential characteristics of the original process and whose first passage probabilities can be computed efficiently. In particular, the behavioral switch of the compatible pasting process can be observed on a high-frequency grid. We have analyzed the first passage problem using the concept of $n$-bridges. These are particular shapes of paths with given number of observations, and which stand above their end points. Importantly, the observation of the behavioral switch allows us to further condition the n-bridges on this Poissonian point. 

Our model is relevant to a diverse range of real-world systems. For instance, it is pertinent to queuing theory, with potential use cases spanning scenarios from telecommunications (where data flow can be disrupted by server downtime) to supply chain management (where sudden resource shortages can provoke operational shifts). Furthermore, while our study primarily concentrated on ruin, the methodology we employed can be extended to calculate other functionals, such as Laplace transform of the lifetime reward from the fluid process.
Moreover, the behavioral switch can occur according to other criteria than ruin. For example, when one of the processes reaches a certain barrier, or a Parisian ruin. 

The focus of this work has been on the scenario where the initial value of the fluid process is $0$. However, circumstances with initial capital greater than $0$ can also be explored by employing Erlangization and fluidization techniques; see e.g., \cite[Section 6]{amini2023duration}. Another interesting extension of this work would be to integrate external controls into the system. In this modification, the behavioral switch would not solely depend on the ruin of another coordinate but could also be triggered by an external agent. This enhancement could allow for the creation of more flexible and responsive systems, wherein changes are not purely reactive but can also be preemptive, offering control over the system's dynamics.

Our analysis of first return probabilities can help understand the implications of the ruin-dependent behavioral switch for the performance and stability of the system. The proposed bivariate stochastic fluid process model with ruin-dependent behavioral switching is a versatile  tool for modeling and analyzing  interdependent systems. Moreover, by incorporating  behavioral switches  that depend, for example, on the proportion of ruined agents, we could extend our model to more complex settings in multiple dimensions.

A further research direction is the investigation of sensitivity analysis with respect to system parameters. This could include exploring how different parameters change the distribution of ruin times. This analysis would inform decision-makers and lead to better design and control of such interdependent systems.
\paragraph{Acknowledgements.} We gratefully acknowledge financial support from NSF Award \#1653354 and AXA Research Fund Award on `Mitigating risk in the wake of the pandemic'.

\bibliographystyle{abbrv}
\bibliography{oscar}
\end{document}